\newtheorem{theorem}{Theorem}
\newtheorem{lemma}{Lemma}
\newtheorem{proposition}{Proposition}
\newtheorem{remark}{Remark}
\newenvironment{proof}{\begin{trivlist} \item[\hskip\labelsep{\it Proof.}]}{$\hfill\Box$\end{trivlist}}
\newcommand{\gr}{\gtrsim}
\newcommand{\lr}{\lesssim}
\newcommand{\rd}{\,\mathrm{d}}
\newcommand{\bsj}{\boldsymbol{j}}
\newcommand{\bsx}{\boldsymbol{x}}
\newcommand{\bsz}{\boldsymbol{z}}
\newcommand{\bsm}{\boldsymbol{m}}
\newcommand{\bst}{\boldsymbol{t}}
\newcommand{\bsa}{\boldsymbol{a}}
\newcommand{\bszero}{\boldsymbol{0}}
\newcommand{\bsone}{\boldsymbol{1}}
\newcommand{\RR}{\mathbb{R}}
\newcommand{\NN}{\mathbb{N}}
\newcommand{\DD}{\mathbb{D}}
\newcommand{\ZZ}{\mathbb{Z}}
\newcommand{\cP}{\mathcal{P}}
\newcommand{\cH}{\mathcal{H}}
\title{Digital nets in dimension two with the optimal order of $L_p$ discrepancy}
\author{Ralph Kritzinger and Friedrich Pillichshammer\thanks{The authors are supported by the Austrian Science Fund (FWF): Project F5509-N26, which is a part of the Special Research Program "Quasi-Monte Carlo Methods: Theory and Applications".}}
\date{}
\begin{document}

\maketitle

\begin{abstract}
We study the $L_p$ discrepancy of two-dimensional digital nets for finite $p$. In the year 2001 Larcher and Pillichshammer identified a class of digital nets for which the symmetrized version in the sense of Davenport has $L_2$ discrepancy of the order $\sqrt{\log N}/N$, which is best possible due to the celebrated result of Roth. However, it remained open whether this discrepancy bound also holds for the original digital nets without any modification. 

In the present paper we identify nets from the above mentioned class for which the symmetrization is not necessary in order to achieve the optimal order of $L_p$ discrepancy for all $p \in [1,\infty)$. 

Our findings are in the spirit of a paper by Bilyk from 2013, who considered the $L_2$ discrepancy of lattices consisting of the elements $(k/N,\{k \alpha\})$ for $k=0,1,\ldots,N-1$, and who gave Diophantine properties of $\alpha$ which guarantee the optimal order of $L_2$ discrepancy. 
\end{abstract}

\centerline{\begin{minipage}[hc]{130mm}{
{\em Keywords:} $L_p$ discrepancy, digital nets, Hammersley net\\
{\em MSC 2010:} 11K06, 11K38}
\end{minipage}}

 \allowdisplaybreaks

\section{Introduction}
Discrepancy is a measure for the irregularities of point distributions in the unit interval (see, e.g., \cite{kuinie}).
Here we study point sets $\cP$ with $N$ elements in the two-dimensional unit interval $[0,1)^2$. We define
the {\it discrepancy function} of such a point set by
$$ \Delta_{\cP}(\bst)=\frac{1}{N}\sum_{\bsz\in\cP}\bsone_{[\bszero,\bst)}(\bsz)-t_1t_2, $$
where for $\bst=(t_1,t_2)\in [0,1]^2$ we set $[\bszero,\bst)=[0,t_1)\times [0,t_2)$ with area $t_1t_2$
and denote by $\bsone_{[\bszero,\bst)}$ the indicator function of this interval. The {\it $L_p$ discrepancy} for $p\in [1,\infty)$ of $\cP$ is given by
$$ L_{p}(\cP):=\|\Delta_{\cP}\|_{L_{p}([0,1]^2)}=\left(\int_{[0,1]^2}|\Delta_{\cP}(\bst)|^p\rd \bst\right)^{\frac{1}{p}} $$
and the {\it star discrepancy} or {\it $L_{\infty}$ discrepancy} of $\cP$ is defined as
$$ L_{\infty}(\cP):=\|\Delta_{\cP}\|_{L_{\infty}([0,1]^2)}=\sup_{\bst \in [0,1]^2}|\Delta_{\cP}(\bst)|. $$

The $L_p$ discrepancy is a quantitative measure for the irregularity of distribution of a point set. Furthermore, it is intimately related to the worst-case integration error of quasi-Monte Carlo rules; see \cite{DP10,kuinie, LP14,Nied92}.

It is well known that for every $p\in [1,\infty)$ we have\footnote{Throughout this paper, for functions $f,g:\NN \rightarrow \RR^+$, we write $g(N) \lr f(N)$, 
if there exists a $C>0$ such that $g(N) \le C f(N)$ with a positive constant $C$ that is independent of $N$. Likewise, we write $g(N) \gr f(N)$ if $g(N) \geq C f(N)$. Further, we write $f(N) \asymp g(N)$ if the relations $g(N) \lr f(N)$ and $g(N) \gr f(N)$ hold simultaneously.} 
\begin{equation} \label{roth} 
L_p(\mathcal{P}) \gr_p \frac{\sqrt{\log{N}}}{N}, 
\end{equation}
for every $N \ge 2$ and every $N$-element point set $\mathcal{P}$ in $[0,1)^2$. Here $\log$ denotes the natural logarithm. This was first shown by Roth \cite{Roth2} for $p = 2$ and hence for all $p \in [2,\infty]$ and later by
Schmidt \cite{schX} for all $p\in(1,2)$. The case $p=1$ was added by Hal\'{a}sz \cite{hala}. For the star discrepancy we have according to Schmidt~\cite{Schm72distrib} that
\begin{equation} \label{schmidt} 
L_{\infty}(\mathcal{P}) \gr \frac{\log{N}}{N}, 
\end{equation}
for every $N \ge 2$ and every $N$-element point set $\mathcal{P}$ in $[0,1)^2$.

\paragraph{Irrational lattices.}
It is well-known, that the lower bounds in \eqref{roth} and \eqref{schmidt} are best possible in the order of magnitude in $N$. For example, when the irrational number $\alpha=[a_0;a_1,a_2,\ldots]$ has bounded partial quotients in it's continued fraction expansion, then the lattice $\cP_{\alpha}$ consisting of the points $(k/N,\{k \alpha\})$ for $k=0,1,\ldots,N-1$, where $\{\cdot\}$ denotes reduction modulo one, has optimal order of star discrepancy in the sense of \eqref{schmidt} (see, e.g., \cite{lerch} or \cite[Corollary~3.5 in combination with Lemma~3.7]{Nied92}). This is, in this generality, not true anymore when, e.g., the $L_2$ discrepancy is considered. However, in 1956 Davenport~\cite{daven} showed that the symmetrized version $\cP_{\alpha}^{{\rm sym}}:=\cP_{\alpha}\cup \cP_{-\alpha}$ of $\cP_{\alpha}$ consisting of $2N$ points has $L_2$ discrepancy of the order $\sqrt{\log N}/N$ which is optimal with respect to \eqref{roth}. 

Later Bilyk~\cite{bil} introduced a further condition on $\alpha$ which guarantees the optimal order of $L_2$ discrepancy without the process of symmetrization. If and only if the bounded partial quotients satisfy $|\sum_{k=0}^{N-1} (-1)^k a_k| \lr_{\alpha} \sqrt{n}$, then $L_2(\cP_{\alpha}) \asymp_{\alpha} \sqrt{\log N}/N$.

\paragraph{Digital nets.} In this paper we study analog questions for digital nets over $\ZZ_2$, which are an important class of point sets with low star discrepancy. Since we only deal with digital nets over $\ZZ_2$ and in dimension 2 we restrict the necessary definitions to this case. For the general setting we refer to the books of Niederreiter~\cite{Nied92} (see also \cite{Nied87}), of Dick and Pillichshammer~\cite{DP10}, or of Leobacher and Pillichshammer~\cite{LP14}.

Let $n\in \NN$ and let $\ZZ_2$ be the finite field of order 2, which we identify with the set $\{0,1\}$ equipped with arithmetic operations modulo 2. A two-dimensional digital net over $\ZZ_2$ is a point set $\{\boldsymbol{x}_0,\ldots, \boldsymbol{x}_{2^n-1}\}$ in $[0,1)^2$, which is generated by two $n\times n$ matrices over $\ZZ_2$. The procedure is as follows. 
\begin{enumerate}
\item Choose two $n \times n$ matrices $C_1$ and $C_2$ with entries from $\ZZ_2$.
\item For $r\in\{0,1,\dots,2^n-1\}$ let $r=r_0+2r_1  +\cdots +2^{n-1}r_{n-1}$ with $r_i\in\{0,1\}$ for all $i\in\{0,\dots,n-1\}$ be the dyadic expansion of $r$, and set $\vec{r}=(r_0,\ldots,r_{n-1})^{\top}\in \ZZ_2^n$. 
\item For $j=1,2$ compute $C_j \vec{r}=:(y_{r,1}^{(j)},\ldots ,y_{r,n}^{(j)})^{\top}\in \ZZ_2^n$, where all arithmetic operations are over $\ZZ_2$.
\item For $j=1,2$ compute $x_r^{(j)}=\frac{y_{r,1}^{(j)}}{2}+\cdots +\frac{y_{r,n}^{(j)}}{2^n}$ and set $\boldsymbol{x}_{r}=(x_r^{(1)},x_{r}^{(2)})\in [0,1)^2$.
\item Set $\cP:=\{\bsx_0,\dots,\bsx_{2^n-1}\}$. We call $\cP$ a {\it digital net over $\ZZ_2$} generated by $C_1$ and $C_2$.
\end{enumerate}

One of the most well-known digital nets is the {\it 2-dimensional Hammersley net $\cP^{{\rm Ham}}$ in base 2} which is generated by the matrices 
$$C_1 = \left ( \begin{array}{llcll}
0 & 0 & \cdots & 0 & 1\\
0 & 0 & \cdots & 1 & 0 \\
\multicolumn{5}{c}\dotfill\\
0 & 1 & \cdots & 0 & 0 \\
1 & 0 & \cdots & 0 & 0
\end{array} \right ) \ \ \mbox{ and } \ \
C_2 =  \left ( \begin{array}{llcll}
1 & 0 & \cdots & 0 & 0\\
0 & 1 & \cdots & 0 & 0 \\
\multicolumn{5}{c}\dotfill\\
0 & 0 & \cdots & 1 & 0 \\
0 & 0 & \cdots & 0 & 1
\end{array} \right ).$$
Due to the choice of $C_1$ the first coordinates of the elements of the Hammersley net are $x_r^{(1)}=r/2^n$ for $r=0,1,\ldots,2^n -1$.

\paragraph{$(0,n,2)$-nets in base 2.}
A point set $\cP$ consisting of $2^n$ elements in $[0,1)^2$ is called a {\it $(0,n,2)$-net in base 2}, if every dyadic box 
$$\left[\frac{m_1}{2^{j_1}},\frac{m_1+1}{2^{j_1}}\right) \times \left[\frac{m_2}{2^{j_2}},\frac{m_2+1}{2^{j_2}}\right),$$
where $j_1,j_2\in\NN_0$ and $m_1\in\{0,1,\dots,2^{j_1}-1\}$ and $m_2\in\{0,1,\dots,2^{j_2}-1\}$  with volume $2^{-n}$, i.e. with $j_1+j_2=n$, contains exactly one element of $\cP$. 

It is well known that a digital net over $\ZZ_2$ is a $(0,n,2)$-net in base 2 if and only if the following condition holds: For every choice of integers $d_1,d_2\in \NN_0$ with $d_1+d_2=n$ the first $d_1$ rows of $C_1$ and the first $d_2$ rows of $C_2$ are linearly independent.

Every digital $(0,n,2)$-net achieves the optimal order of star discrepancy in the sense of \eqref{schmidt}, whereas there exist nets which do not have the optimal order of $L_p$ discrepancy for finite $p$. One example is the Hammersley net as defined above for which we have (see \cite{FauPil,Lar,Pill}) $$L_p(\cP^{{\rm Ham}})=\left(\left(\frac{n}{8 \cdot 2^n}\right)^p+O(n^{p-1})\right)^{1/p} \ \ \mbox{for all $p\in [1,\infty)$}$$ and $$L_{\infty}(\cP^{{\rm Ham}})=\frac{1}{2^n} \left(\frac{n}{3}+\frac{13}{9}-(-1)^n \frac{4}{9 \cdot 2^n}\right).$$

\paragraph{Symmetrized nets.}

Motivated by the results of Davenport for irrational lattices, Larcher and Pillichshammer~\cite{lp01}  studied the symmetrization of digital nets. Let $\bsx_r=(x_r,y_r)$ for $r=0,1,\ldots,2^n-1$ be the elements of a digital net generated by the matrices $$C_1 = \left ( \begin{array}{llcll}
0 & 0 & \cdots & 0 & 1\\
0 & 0 & \cdots & 1 & 0 \\
\multicolumn{5}{c}\dotfill\\
0 & 1 & \cdots & 0 & 0 \\
1 & 0 & \cdots & 0 & 0
\end{array} \right )\ \ \mbox{ and }\ \ C_2 =  \left ( \begin{array}{llcll}
1 & a_{1,2} & \cdots & a_{1,n-1} & a_{1,n}\\
0 & 1 & \cdots & a_{2,n-1} & a_{2,n} \\
\multicolumn{5}{c}\dotfill\\
0 & 0 & \cdots & 1 & a_{n-1,n} \\
0 & 0 & \cdots & 0 & 1
\end{array} \right ),
$$
with entries $a_{j,k} \in \ZZ_2$ for $1 \le j <k \le n$. The matrix $C_2$ is a so-called ``{\it non-singular upper triangular (NUT) matrix}''. Then the {\it symmetrized net} $\cP^{{\rm sym}}$ consisting of $(x_r,y_r)$ and $(x_r,1-y_r)$ for $r=0,1,\ldots,2^n-1$ has $L_2$ discrepancy of optimal order $$L_2(\cP^{{\rm sym}}) \asymp \frac{\sqrt{n}}{2^{n+1}} \ \ \ \mbox{for every $n \in \NN$.}$$

In the present paper we show in the spirit of the paper of Bilyk~\cite{bil} that there are NUT matrices $C_2$ such that symmetrization is not required in order to achieve the optimal order of $L_2$ discrepancy. Or result we be true for the $L_p$ discrepancy for all finite $p$ and not only for the $L_2$ case.

\section{The result}

The central aim of this paper is to provide conditions on the generating matrices $C_1,C_2$ which lead to the optimal order of $L_p$ discrepancy of the corresponding nets. We do so for a class of nets which are generated by $n\times n$ matrices over $\ZZ_2$ of the following form:
\begin{equation} \label{matrixa} C_1=
\begin{pmatrix}
0 & 0 & \cdots & 0 & 1\\
0 & 0 & \cdots & 1 & 0 \\
\multicolumn{5}{c}\dotfill\\
0 & 1 & \cdots & 0 & 0 \\
1 & 0 & \cdots & 0 & 0
\end{pmatrix}
\end{equation}
and a NUT matrix of the special form
\begin{equation} C_2=
\begin{pmatrix}
1 & a_{1} & a_{1} & \cdots & a_{1} & a_{1} & a_{1} \\
0 & 1 & a_{2} & \cdots &  a_{2} & a_{2} & a_{2} \\
0 & 0 & 1 & \cdots & a_{3} & a_{3} & a_{3} \\
\vdots & \vdots & \vdots & \ddots & \vdots & \vdots & \vdots & \\
0 & 0 & 0 & \cdots &  1 & a_{n-2} & a_{n-2} \\
0 & 0 & 0 & \cdots &  0 & 1 & a_{n-1} \\
0 & 0 & 0 & \cdots &  0 & 0 & 1 
\end{pmatrix},
\end{equation}
where $a_i\in \ZZ_2$ for all $i\in\{1,\dots,n-1\}$. We study the $L_p$ discrepancy of the digital net $\cP_{\bsa}$ generated by $C_1$ and $C_2$, where $\bsa=(a_1,\dots,a_{n-1})\in \ZZ_2^{n-1}$. The set $\cP_{\bsa}$ can be written as
\begin{equation}\label{darstPa}
\cP_{\bsa}=\left\{\bigg(\frac{t_n}{2}+\dots+\frac{t_1}{2^n},\frac{b_1}{2}+\dots+\frac{b_n}{2^n}\bigg):t_1,\dots, t_n \in\{0,1\}\right\}, 
\end{equation}
where $b_k=t_k\oplus a_{k}(t_{k+1}\oplus \dots \oplus t_n)$ for $k\in\{1,\dots,n-1\}$ and $b_n=t_n$. The operation $\oplus$ denotes addition modulo 2.

The following result states that the order of the $L_p$ discrepancy of the digital nets $\cP_{\bsa}$ is determined by the number of zero elements in $\bsa$.

\begin{theorem} \label{theo1}
Let $h_n=h_n(\bsa)=\sum_{i=1}^{n-1}(1-a_i)$ be the number of zeroes in the tuple $\bsa$. Then we have for all $p\in[1,\infty)$
$$L_p(\cP_{\bsa})\asymp_p \frac{\max\{\sqrt{n},h_n(\bsa)\}}{2^n}.$$
In particular, the net $\cP_{\bsa}$ achieves the optimal order of $L_p$ discrepancy for all $p\in [1,\infty)$ if and only if $h_n(\bsa)\lesssim \sqrt{n}$.
\end{theorem}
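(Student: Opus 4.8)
The plan is to pass to the tensor‑product Haar basis of $L_2([0,1)^2)$ and analyse the Haar coefficients $\mu_{\bsj,\bsm}=\langle\Delta_{\cP_{\bsa}},h_{\bsj,\bsm}\rangle$ of the discrepancy function, where $\bsj=(j_1,j_2)$ ranges over the dyadic levels and $\bsm$ over the boxes of level $\bsj$. The decisive step is a \emph{coefficient lemma}, obtained by feeding the explicit description \eqref{darstPa} — the bit‑reversal in the first coordinate and the relations $b_k=t_k\oplus a_k(t_{k+1}\oplus\cdots\oplus t_n)$ in the second — into the defining integrals. I expect it to yield: (i) $\mu_{\bsj,\bsm}$ vanishes unless $j_1+j_2$ lies in a window around $n$ and $\bsm$ is compatible with the digital structure; (ii) in any case $|\mu_{\bsj,\bsm}|\lesssim 2^{-n-j_1-j_2}$; and (iii) the number of boxes $\bsm$ at a given level on which $|\mu_{\bsj,\bsm}|$ attains, up to a constant, its maximal size $2^{-n-j_1-j_2}$, and — crucially — the \emph{signs} of those coefficients, are governed by the pattern of zeros among the $a_i$ with index in a range determined by $\bsj$. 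Cashing this out, the Parseval sum should satisfy $\sum_{\bsj,\bsm}2^{j_1+j_2}|\mu_{\bsj,\bsm}|^2\asymp\max\{n,h_n(\bsa)^2\}\,2^{-2n}$: the term $n$ is the generic contribution carried by every net built from $C_1$ (it is already of this order for the Hammersley net), while each entry $a_i=0$ injects additional coherent coefficients which upgrade this to $h_n^2$ once $h_n\gtrsim\sqrt n$.

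Granting the coefficient lemma, the \emph{upper bound} is standard. For $p\in[1,2]$ one uses $L_p(\cP_{\bsa})\le L_2(\cP_{\bsa})$ together with Parseval, so that the count above gives $L_2(\cP_{\bsa})\asymp\max\{\sqrt n,h_n(\bsa)\}/2^n$. For $p\in(2,\infty)$ one applies the dyadic Littlewood–Paley inequality $\|\Delta_{\cP_{\bsa}}\|_p\lesssim_p\|S\Delta_{\cP_{\bsa}}\|_p$, splits the square function according to the hyperbolic parameter $\ell=j_1+j_2$, estimates each slice $\sum_{j_1+j_2=\ell}$ by a Khintchine‑type inequality fed with the $\ell^\infty$‑bounds on the normalised coefficients and with the per‑slice counts from the lemma, and sums over $\ell$. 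Since the coefficient estimates are the same as in the $L_2$ case, the outcome is again $L_p(\cP_{\bsa})\lesssim_p\max\{\sqrt n,h_n(\bsa)\}/2^n$, with the dependence on $p$ entering only through the Littlewood–Paley and Khintchine constants; this is exactly the route of Bilyk~\cite{bil}.

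For the \emph{lower bound} it suffices, since $L_1(\cP_{\bsa})\le L_p(\cP_{\bsa})$ for every $p\ge1$, to prove $L_1(\cP_{\bsa})\gtrsim\max\{\sqrt n,h_n(\bsa)\}/2^n$. The estimate $L_1(\cP_{\bsa})\gtrsim\sqrt n/2^n$ is the case $p=1$ of \eqref{roth} (Hal\'asz~\cite{hala}), so the new content is $L_1(\cP_{\bsa})\gtrsim h_n(\bsa)/2^n$, relevant only when $h_n\gtrsim\sqrt n$. For this I would build a Hal\'asz‑type dual function $\Psi$ — a normalised Riesz‑type product of the Haar functions $h_{\bsj,\bsm}$ that the coefficient lemma has singled out as carrying a coefficient of size $\asymp2^{-n-j_1-j_2}$ with a prescribed sign, the active levels $\bsj$ being those ``switched on'' by the zero entries of $\bsa$. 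One arranges $\|\Psi\|_\infty\lesssim1$ by the usual control of the lower‑order cross terms of the product, while $\int_{[0,1]^2}\Delta_{\cP_{\bsa}}(\bst)\Psi(\bst)\rd\bst\gtrsim h_n/2^n$ because the selected coefficients add up with a common sign; then $L_1(\cP_{\bsa})\ge\bigl(\int\Delta_{\cP_{\bsa}}\Psi\bigr)/\|\Psi\|_\infty\gtrsim h_n/2^n$. (For $p\ge2$ the sign analysis can be bypassed: Parseval already gives $L_2(\cP_{\bsa})\gtrsim h_n/2^n$, and $L_p\ge L_2$.) Combining the two bounds yields $L_p(\cP_{\bsa})\asymp_p\max\{\sqrt n,h_n(\bsa)\}/2^n$, and the stated equivalence is then immediate.

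The genuine obstacle is the coefficient lemma, and within it the sign bookkeeping: extracting from the recursion $b_k=t_k\oplus a_k(t_{k+1}\oplus\cdots\oplus t_n)$ not merely which boxes $\bsm$ carry a non‑negligible coefficient at each level, but also with which sign, and then showing that the zeros of $\bsa$ make these contributions accumulate coherently across levels — so that the $L_2$‑count reaches order $h_n^2$ and the dual function in the lower bound can be kept bounded. Everything downstream (Parseval, the square‑function estimate, the Riesz‑type construction) is routine once the coefficients are understood, although verifying $\|\Psi\|_\infty\lesssim1$ will still require some care with the combinatorics of the active levels.
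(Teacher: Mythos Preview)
Your overall framework --- Haar expansion plus Littlewood--Paley for the upper bound, Roth/Hal\'asz for the $\sqrt n$ part of the lower bound --- matches the paper. But your picture of \emph{where} the $h_n$ dependence lives is off, and this is the point where your plan diverges from what actually happens.

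You expect the zeros of $\bsa$ to inject ``additional coherent coefficients'' spread across the hyperbolic levels $j_1+j_2\approx n$, so that a sign analysis and a Riesz-product dual function are needed to extract $h_n/2^n$ from $L_1$. In fact the paper's coefficient computation (their Lemma~\ref{coro1}) shows that \emph{every} Haar coefficient with $\bsj\neq(-1,-1)$ obeys bounds that are completely independent of $\bsa$; these contribute only the universal $\sqrt n/2^n$. The entire $h_n$ dependence sits in the single constant coefficient
\[
\mu_{(-1,-1),(0,0)}=\int_{[0,1]^2}\Delta_{\cP_{\bsa}}(\bst)\rd\bst=\frac{h_n+5}{2^{n+3}}+\frac{1}{2^{2n+2}}.
\]
Once you know this, the lower bound is immediate: $L_p(\cP_{\bsa})\ge L_1(\cP_{\bsa})\ge\bigl|\int\Delta_{\cP_{\bsa}}\bigr|=|\mu_{(-1,-1),(0,0)}|\asymp h_n/2^n$. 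No dual function, no sign bookkeeping, no Riesz product. Your item (i) --- that $\mu_{\bsj,\bsm}$ vanishes unless $j_1+j_2$ lies in a window around $n$ --- would in particular throw away precisely the coefficient that carries all the information.

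So the ``genuine obstacle'' you identify (sign coherence across levels) does not exist for this class of nets; the obstacle is rather the patient case-by-case verification that all non-constant coefficients are small uniformly in $\bsa$, after which both bounds fall out with no further ideas. Your Littlewood--Paley route for the upper bound is fine and is what the paper does, but feed it the correct coefficient lemma: $|\mu_{\bsj,\bsm}|\lesssim 2^{-2n}$ for $j_1+j_2\le n-3$, $|\mu_{\bsj,\bsm}|\lesssim 2^{-n-j_1-j_2}$ otherwise, plus the explicit constant term above.
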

 
The proof of Theorem~\ref{theo1}, which will be given in Section~\ref{haarf}, is based on Littlewood-Paley theory and tight estimates of the Haar coefficients of the discrepancy function $\Delta_{\cP_{\bsa}}$. 

For example, if $\bsa=\bszero:=(0,0,\ldots,0)$ we get the Hammersley net $\cP^{{\rm Ham}}$ in dimension 2. We have $h_n(\bszero)=n-1$ and hence $$L_p(\cP_{\bszero})\asymp_p \frac{n}{2^n}.$$ If $\bsa=\bsone:=(1,1,\ldots,1)$, then we have $h_n(\bsone)=0$ and hence $$L_p(\cP_{\bsone})\asymp_p \frac{\sqrt{n}}{2^n}.$$

\begin{remark} \rm
The approach via Haar functions allows the precise computation of the $L_2$ discrepancy of digital nets via Parseval's identity. We did so for a certain class of nets in~\cite{Kritz}. It would be possible but tedious to do the same for the class $\cP_{\bsa}$ of nets considered in this paper. However, we only executed the massive calculations for the special case where $\bsa=\bsone:=(1,1,\dots,1)$, hence where $C_2$ is a NUT matrix filled with ones in the upper right triangle. We conjecture that this net has the lowest $L_2$ discrepancy among the class of nets $\cP_{\bsa}$ for a fixed $n\in\NN$. The exact value of its $L_2$ discrepancy is given by 
\begin{equation}\label{LpP1}
L_2(\cP_{\bsone})=\frac{1}{2^n}\left(\frac{5n}{192}+\frac{15}{32}+\frac{1}{4\cdot 2^{n}}-\frac{1}{72\cdot 2^{2n}}\right)^{1/2}. 
\end{equation}
We omit the lengthy proof, but its correctness may be checked with Warnock's formula~\cite{Warn} (see also \cite[Proposition~2.15]{DP10})for small values of $n$. Compare \eqref{LpP1} with the exact $L_2$ discrepancy of $\cP^{{\rm Ham}}=\cP_{\bszero}$ which is given by (see \cite{FauPil,HaZa,Pill,Vi}) 
$$L_2(\cP_{\bszero})=\frac{1}{2^n}\left(\frac{n^2}{64}+\frac{29n}{192}+\frac{3}{8}-\frac{n}{16 \cdot 2^n}+\frac{1}{4\cdot 2^n}-\frac{1}{72 \cdot 2^{2n}}\right)^{1/2}.$$
\end{remark}

\section{The proof of Theorem~\ref{theo1} via Haar expansion of the discrepancy function} \label{haarf}

A dyadic interval of length $2^{-j}, j\in {\mathbb N}_0,$ in $[0,1)$ is an interval of the form 
$$ I=I_{j,m}:=\left[\frac{m}{2^j},\frac{m+1}{2^j}\right) \ \ \mbox{for } \  m\in \{0,1,\ldots,2^j-1\}.$$ 
The left and right half of $I_{j,m}$ are the dyadic intervals $I_{j+1,2m}$ and $I_{j+1,2m+1}$, respectively. The Haar function $h_{j,m}$  
is the function on $[0,1)$ which is  $+1$ on the left half of $I_{j,m}$, $-1$ on the right half of $I_{j,m}$ and 0 outside of $I_{j,m}$. The $L_\infty$-normalized Haar system consists of
all Haar functions $h_{j,m}$ with $j\in{\mathbb N}_0$ and  $m=0,1,\ldots,2^j-1$ together with the indicator function $h_{-1,0}$ of $[0,1)$.
Normalized in $L_2([0,1))$ we obtain the orthonormal Haar basis of $L_2([0,1))$. 

Let ${\mathbb N}_{-1}=\NN_0 \cup \{-1\}$ and define ${\mathbb D}_j=\{0,1,\ldots,2^j-1\}$ for $j\in{\mathbb N}_0$ and ${\mathbb D}_{-1}=\{0\}$.
For $\bsj=(j_1,j_2)\in{\mathbb N}_{-1}^2$ and $\bsm=(m_1,m_2)\in {\mathbb D}_{\bsj} :={\mathbb D}_{j_1} \times {\mathbb D}_{j_2}$, 
the Haar function $h_{\bsj,\bsm}$ is given as the tensor product 
$$h_{\bsj,\bsm}(\bst) = h_{j_1,m_1}(t_1) h_{j_2,m_2}(t_2) \ \ \ \mbox{ for } \bst=(t_1,t_2)\in[0,1)^2.$$
We speak of $I_{\bsj,\bsm} = I_{j_1,m_1} \times I_{j_2,m_2}$ as dyadic boxes with level $|\bsj|=\max\{0,j_1\}+\max\{0,j_2\}$, where we set $I_{-1,0}=\bsone_{[0,1)}$. The system
$$ \left\{2^{\frac{|\bsj|}{2}}h_{\bsj,\bsm}: \bsj\in\NN_{-1}^2, \bsm\in \DD_{\bsj}\right\} $$
is an orthonormal basis of $L_2([0,1)^2)$ and we have Parseval's identity which states that for every function $f\in L_2([0,1)^2)$ we have
\begin{equation} \label{parseval}
   \|f\|_{L_2([0,1)^2)}^2=\sum_{\bsj\in \NN_{-1}^2} 2^{|\bsj|} \sum_{\bsm\in\DD_{\bsj}} |\mu_{\bsj,\bsm}|^2,
\end{equation}
where the numbers $\mu_{\bsj,\bsm}=\mu_{\bsj,\bsm}(f)=\langle f, h_{\bsj,\bsm} \rangle =\int_{[0,1)^2} f(\bst) h_{\bsj,\bsm}(\bst)\rd\bst$ are the so-called Haar coefficients of $f$. There
is no such identity for the $L_p$ norm of $f$ for $p \not=2$; however, for a function $f\in L_p([0,1)^2)$ we have a so-called Littlewood-Paley inequality. It involves the square function $S(f)$ of a function $f\in L_p([0,1)^2)$ which is given as
$$S(f) = \left( \sum_{\bsj \in \NN_{-1}^2} \sum_{\bsm \in \mathbb{D}_{\bsj}} 2^{2|\bsj|} \, |\mu_{\bsj,\bsm}|^2 \, {\mathbf 1}_{I_{\bsj,\bsm}} \right)^{1/2},$$ where ${\mathbf 1}_I$ is the characteristic function of $I$.

\begin{lemma}[Littlewood-Paley inequality]\label{lpi}
 Let $p \in (1,\infty)$ and let $f\in L_p([0,1)^2)$. Then 
 $$ \| S(f) \|_{L_p} \asymp_{p} \| f \|_{L_p}.$$
\end{lemma}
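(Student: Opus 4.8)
The plan is to treat the Littlewood--Paley inequality as a standard fact from harmonic analysis and to indicate where a self-contained proof can be located, rather than reproving it from scratch. The statement is exactly the two-dimensional dyadic version of the classical Littlewood--Paley--Stein inequality for the square function associated with the Haar (martingale-difference) decomposition, so the cleanest route is to invoke the martingale framework: the partial sums of the Haar expansion of $f$ with respect to the product filtration generated by dyadic boxes form a (two-parameter) martingale, and $S(f)$ is precisely its square function. The Burkholder--Davis--Gundy inequality then gives $\|S(f)\|_{L_p}\asymp_p\|f\|_{L_p}$ for $p\in(1,\infty)$; the two-parameter case follows from the one-parameter case by iterating in each coordinate (Fubini in the two groups of variables), which is exactly why the restriction $p>1$ is needed and why the implied constants depend only on $p$.

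Concretely, I would proceed in three steps. First, recall the one-dimensional dyadic square-function inequality on $L_p([0,1))$: for $g\in L_p$, writing $g=\sum_{j\ge -1}\sum_m \mu_{j,m}(g)\,2^j h_{j,m}$ in $L_2$-normalized form, one has $\|(\sum_{j,m}2^{2j}|\mu_{j,m}|^2\mathbf 1_{I_{j,m}})^{1/2}\|_{L_p}\asymp_p\|g\|_{L_p}$; a proof via Khintchine's inequality plus the (dyadic) boundedness of martingale transforms, or directly via Burkholder's inequality, is standard and can be cited from, e.g., Stein's book or from the discrepancy-theoretic sources already used in the paper. Second, pass to two dimensions by applying the one-dimensional inequality in the first variable for a.e.\ fixed second variable and then in the second variable, using the tensor-product structure $h_{\bsj,\bsm}=h_{j_1,m_1}\otimes h_{j_2,m_2}$ together with Fubini's theorem and the vector-valued (Fefferman--Stein / Hilbert-space-valued) form of the one-parameter inequality, so that the intermediate $\ell^2$-sums can be carried through the $L_p$ norm. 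Third, collect the comparisons to obtain $\|S(f)\|_{L_p}\asymp_p\|f\|_{L_p}$.

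The main obstacle is the iteration in Step 2: going from one parameter to two parameters is not literally just two applications of a scalar inequality, because after applying the estimate in the first coordinate one is left with an $L_p(L^2)$ expression, and to apply the estimate again in the second coordinate one needs the Hilbert-space-valued version of the dyadic square-function inequality (equivalently, the UMD property of $L^2$). This is where all the genuine content sits; everything else is bookkeeping with the tensor structure of the Haar system and Fubini. Given that the inequality is entirely classical and is being used here only as a black box for the discrepancy estimates, I would keep the exposition brief: state the one-dimensional result with a precise reference, note the vector-valued extension, and indicate the Fubini iteration, rather than reproducing the Burkholder--Davis--Gundy machinery. For readers wanting full detail, pointers to Wang's work on the $L_p$ discrepancy via Littlewood--Paley theory, to the surveys on small-ball inequalities, and to standard references on dyadic harmonic analysis suffice.
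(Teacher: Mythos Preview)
Your proposal is correct and, in fact, goes beyond what the paper does: the paper states Lemma~\ref{lpi} without proof, treating it as a classical black-box result from harmonic analysis, and simply applies it to the Haar coefficients of the discrepancy function. Your sketch via the one-parameter Burkholder/martingale square-function inequality iterated through a vector-valued (Hilbert-space-valued) version is the standard route and correctly identifies the only nontrivial point, namely that the passage from one to two parameters requires the $L^2$-valued form of the one-dimensional inequality rather than a naive scalar iteration. For the purposes of this paper a bare citation would suffice, so your write-up is already more detailed than what is needed here.
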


In the following let $\mu_{\bsj,\bsm}$ denote the Haar coefficients if the local discrepancy function $\Delta_{\cP_{\bsa}}$, i.e., $$\mu_{\bsj,\bsm}=\int_{[0,1)^2} \Delta_{\cP_{\bsa}}(\bst) h_{\bsj,\bsm}(\bst) \rd \bst.$$  In order to estimate the $L_p$ discrepancy of $\cP_{\bsa}$ by means of Lemma~\ref{lpi} we require good estimates of the Haar coefficients $\mu_{\bsj,\bsm}$. This is a very technical and tedious task which we defer to the appendix. In the following we just collect the obtained bounds:

\begin{lemma} \label{coro1}
Let $\bsj=(j_1,j_2)\in \NN_{0}^2$. Then
 \begin{itemize}
  \item[(i)] if $j_1+j_2\leq n-3$ and $j_1,j_2\geq 0$ then $|\mu_{\bsj,\bsm}| \lr 2^{-2n}$.
  \item[(ii)] if $j_1+j_2\ge n-2$ and $0\le j_1,j_2\le n$ then $|\mu_{\bsj,\bsm}| \lr 2^{-n-j_1-j_2}$ and
     $|\mu_{\bsj,\bsm}| = 2^{-2j_1-2j_2-4}$ for all but at most $2^n$ coefficients $\mu_{\bsj,\bsm}$ with $\bsm\in {\mathbb D}_{\bsj}$.
  \item[(iii)] if $j_1 \ge n$ or $j_2 \ge n$ then $|\mu_{\bsj,\bsm}| = 2^{-2j_1-2j_2-4}$.
 \end{itemize} 
 Now let $\bsj=(-1,k)$ or $\bsj=(k,-1)$ with $k\in \NN_0$. Then
 \begin{itemize}
  \item[(iv)] if $k<n$ then $|\mu_{\bsj,\bsm}| \lr 2^{-n-k}$.
  \item[(v)] if $k\ge n$  then $|\mu_{\bsj,\bsm}| = 2^{-2k-3}$.
 \end{itemize}
 Finally, if $h_n=\sum_{i=1}^{n-1}(1-a_i)$, then
  \begin{itemize}
   \item[(vi)] $\mu_{(-1,-1),(0,0)} = 2^{-n-3}(h_n+5)+2^{-2n-2}$.
  \end{itemize} 
\end{lemma}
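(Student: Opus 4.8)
The plan is to split every Haar coefficient $\mu_{\bsj,\bsm}$ of $\Delta_{\cP_{\bsa}}$ into a ``counting'' and a ``linear'' part and then to exploit that $\cP_{\bsa}$ is a digital $(0,n,2)$-net all of whose coordinates are integer multiples of $2^{-n}$. For $j\in\NN_0$ and $m\in\DD_j$ put $\psi_{j,m}(z)=\int_z^1 h_{j,m}(t)\rd t$, and set $\psi_{-1,0}(z)=\int_z^1\rd t=1-z$; one checks directly that $\psi_{j,m}$ is supported on $I_{j,m}$, is the piecewise-linear ``tent'' with slopes $\pm1$ and $|\psi_{j,m}|\le 2^{-j-1}$, vanishes at the left endpoint $m/2^j$ of $I_{j,m}$, and satisfies $\int_0^1\psi_{j,m}=-2^{-2j-2}$, while $\int_0^1\psi_{-1,0}=\tfrac12$. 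By Fubini one has $\int_{[0,1)^2}\bsone_{[\bszero,\bst)}(\bsz)\,h_{\bsj,\bsm}(\bst)\rd\bst=\psi_{j_1,m_1}(z_1)\psi_{j_2,m_2}(z_2)$ and $\int_{[0,1)^2}t_1t_2\,h_{\bsj,\bsm}(\bst)\rd\bst=\big(\int_0^1\psi_{j_1,m_1}\big)\big(\int_0^1\psi_{j_2,m_2}\big)$, hence
$$\mu_{\bsj,\bsm}=\frac{1}{2^n}\sum_{\bsz\in\cP_{\bsa}}\psi_{j_1,m_1}(z_1)\,\psi_{j_2,m_2}(z_2)-\Big(\int_0^1\psi_{j_1,m_1}\Big)\Big(\int_0^1\psi_{j_2,m_2}\Big);$$
that is, $\mu_{\bsj,\bsm}$ is exactly the quasi-Monte Carlo error of $\cP_{\bsa}$ for the tensor product $g_{\bsj,\bsm}=\psi_{j_1,m_1}\otimes\psi_{j_2,m_2}$ (with a factor $1-z_i$ replacing $\psi_{-1,0}$). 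The second term is evaluated once and for all from the list above and supplies the exact values in (iii), (v) and the non-exceptional parts of (ii).

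The ``computational'' cases follow quickly. As a coordinate of a point of $\cP_{\bsa}$ lying in a dyadic interval of length $\le 2^{-n}$ must equal its left endpoint, if $j_1\ge n$ (or $j_2\ge n$, or $k\ge n$ in the cases $\bsj=(k,-1),(-1,k)$) then the corresponding factor $\psi$ vanishes on all of $\cP_{\bsa}$, the counting part is $0$, and $\mu_{\bsj,\bsm}$ equals minus the linear part; this gives (iii) and (v). For (ii), if $j_1+j_2>n$ and $j_1,j_2\le n$ then each box $I_{\bsj,\bsm}$ of that resolution contains at most one point of $\cP_{\bsa}$, so on the (at least $2^{j_1+j_2}-2^n$) empty ones the counting part vanishes and $|\mu_{\bsj,\bsm}|=2^{-2j_1-2j_2-4}$, while on the remaining boxes, and also for $n-2\le j_1+j_2\le n$, the crude bound $|\mu_{\bsj,\bsm}|\le 2^{-n}\,\#(\cP_{\bsa}\cap I_{\bsj,\bsm})\,2^{-j_1-1}2^{-j_2-1}+2^{-2j_1-2j_2-4}\lesssim 2^{-n-j_1-j_2}$ already suffices. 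For (vi) one expands $\sum_{\bsz\in\cP_{\bsa}}z_1z_2$ over $\bst\in\{0,1\}^n$ using \eqref{darstPa}: with $z_1=\sum_{l=1}^n t_{n+1-l}2^{-l}$ and $z_2=\sum_{k=1}^n b_k2^{-k}$, each sum $\sum_{\bst}t_{n+1-l}b_k$ equals $2^{n-2}$ unless $t_{n+1-l}$ and $b_k$ coincide as $\ZZ_2$-linear forms in $\bst$, in which case it is $2^{n-1}$; reading off the shape of $C_2$, this coincidence occurs precisely for $(l,k)=(1,n)$ and for the $h_n$ pairs $(l,k)=(n+1-k,k)$ with $1\le k\le n-1$ and $a_k=0$. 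Summing the two resulting geometric series, using $\tfrac{1}{2^n}\sum_{\bsz}z_i=\tfrac12-2^{-n-1}$, and subtracting $\tfrac14$ gives (vi) after a routine simplification. This is the only point where the number $h_n$, and hence the special form of $C_2$, really enters.

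The heart of the matter is (i) together with the range $k\le n-2$ of (iv): there the trivial bound on the counting part $\frac{1}{2^n}\sum_{\bsz}g_{\bsj,\bsm}(\bsz)$ is only $2^{-2(j_1+j_2)-2}$ (resp.\ $2^{-2k-1}$), hence far larger than the asserted $2^{-2n}$ (resp.\ $2^{-n-k}$) when $j_1+j_2$ (resp.\ $k$) is small, so genuine cancellation is required. The plan is to partition $I_{\bsj,\bsm}$ into the $O(2^n)$ elementary boxes $Q=I_{d_1,\cdot}\times I_{d_2,\cdot}$ of volume $2^{-n}$ contained in it, choosing $d_1+d_2=n$ with $d_1>j_1$ and $d_2>j_2$ (possible since $j_1+j_2\le n-3$; in (iv) only one such inequality is needed because one factor of $g_{\bsj,\bsm}$ is globally affine). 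By the $(0,n,2)$-property each $Q$ contains exactly one point $\bsz_Q$, and since $g_{\bsj,\bsm}$ is affine in each variable separately on $Q$ one has $\int_Q g_{\bsj,\bsm}=2^{-n}g_{\bsj,\bsm}(c_Q)$ for the centre $c_Q$ of $Q$; therefore
$$\mu_{\bsj,\bsm}=2^{-n}\sum_{Q\subseteq I_{\bsj,\bsm}}\big(g_{\bsj,\bsm}(\bsz_Q)-g_{\bsj,\bsm}(c_Q)\big).$$
Since $g_{\bsj,\bsm}$ is bi-affine on $Q$, the difference $g_{\bsj,\bsm}(\bsz_Q)-g_{\bsj,\bsm}(c_Q)$ splits into three monomials in the offsets $\xi_Q=z_{Q,1}-c_{Q,1}$ and $\eta_Q=z_{Q,2}-c_{Q,2}$, and the bound reduces to controlling column sums (over a fixed first-coordinate interval) such as $\sum_Q\xi_Q\eta_Q$, $\sum_Q\xi_Q\,\psi_{j_2,m_2}(c_{Q,2})$ and $\sum_Q\eta_Q\,\psi_{j_1,m_1}(c_{Q,1})$. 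Here the digital structure of $\cP_{\bsa}$ is essential: the first coordinates are exactly $\{k/2^n:0\le k<2^n\}$, and the triangular shape of $C_2$ with constant rows $a_i$ makes the relevant digits of the second coordinate a digit-wise affine image of the low-order digits of the first, so the column sums collapse to geometric progressions of the required order, yielding $|\mu_{\bsj,\bsm}|\lesssim 2^{-2n}$ in (i) and $|\mu_{\bsj,\bsm}|\lesssim 2^{-n-k}$ in (iv). (Equivalently, one can expand each $\psi_{j_i,m_i}$ in the orthogonal Haar basis of $L_2([0,1))$ — its coefficients are $O(2^{-2j'})$ at level $j'$ — use linearity of the quasi-Monte Carlo error, and exploit that a $(0,n,2)$-net integrates $h_{\bsj',\bsm'}$ exactly whenever $|\bsj'|\le n-2$ and with error $\le 2^{2-n}$ otherwise.) I expect this digit-combinatorial step, carried out \emph{uniformly} in $\bsj$ and $\bsm$, to be the main obstacle, which is no doubt why it is relegated to the appendix.
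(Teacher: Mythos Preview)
Your setup is correct and equivalent to the paper's: writing $\mu_{\bsj,\bsm}$ as the quasi-Monte Carlo error for $g_{\bsj,\bsm}=\psi_{j_1,m_1}\otimes\psi_{j_2,m_2}$ is exactly what the formulas \eqref{art1}--\eqref{art4} express. Your treatment of (ii), (iii) and (v) is the same as the paper's (Cases 3, 7, 8, 9), and your argument for (vi) --- counting the pairs $(l,k)$ for which $t_{n+1-l}$ and $b_k$ coincide as $\ZZ_2$-linear forms --- is correct and in fact slightly slicker than the paper's split into diagonal and off-diagonal sums in Proposition~\ref{prop1}.

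The gap is in (i) and in the range $k\le n-2$ of (iv). Your box--centre reduction is a legitimate reformulation, but the sentence ``the column sums collapse to geometric progressions of the required order'' is precisely the content of the lemma at these levels, and you have not shown it. More importantly, your parenthetical alternative is \emph{not} equivalent and does not work as stated: expanding each $\psi_{j_i,m_i}$ in the Haar basis and using only that a $(0,n,2)$-net integrates $h_{\bsj',\bsm'}$ exactly for $|\bsj'|\le n-2$ (with error $O(2^{-n})$ otherwise) yields at best $|\mu_{\bsj,\bsm}|\lesssim (n-j_1-j_2)\,2^{-n-j_1-j_2}$. Indeed at level $|\bsj'|=n-1$ one has $|\alpha^{(1)}_{j_1',m_1'}\alpha^{(2)}_{j_2',m_2'}|=2^{-j_1'-j_2'-4}=2^{-n-3}$, the error on every box is exactly $\pm 2^{1-n}$, there are $2^{n-1-j_1-j_2}$ relevant $\bsm'$ and $\asymp n-j_1-j_2$ relevant $\bsj'$; no cancellation is visible from the $(0,n,2)$ property alone. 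For $j_1=j_2=0$ this gives only $\lesssim n\,2^{-n}$, not $2^{-2n}$.

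The paper closes this gap not by a structural argument but by a direct digit-by-digit evaluation (Cases~2, 5 and 6 in the appendix), obtaining \emph{exact} formulas for $\mu_{\bsj,\bsm}$ in which the required cancellation is manifest. These computations occupy several pages and depend throughout on the constant-row form of $C_2$ via the index $w=\max\{i\le j_2:a_i=1\}$ and the parity $t_w=s_w\oplus\cdots\oplus s_{j_2}\oplus t_{j_2+1}\oplus\cdots\oplus t_n$. Your box--centre sums $\sum_Q\xi_Q\eta_Q$, $\sum_Q\xi_Q\psi_{j_2,m_2}(c_{Q,2})$, etc.\ would unwind to the same computation; the framework does not avoid it.
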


\begin{remark}\rm
We remark that Proposition~\ref{coro1} shows that the only Haar coefficient that is relevant in our analysis is the coefficient $\mu_{(-1,-1),(0,0)}$. All other coefficients do not affect the order of $L_p$ discrepancy significantly: they are small enough such that their contribution to the over all $L_p$ discrepancy is of the order of Roth's lower bound.  

The proof of Proposition~\ref{coro1} is split into several cases which take several pages of very technical and tedious computations. We would like to mention that the proof of the formula for the important coefficient $\mu_{(-1,-1),(0,0)}$ is manageable without excessive effort. 
\end{remark}

Now the proof of Theorem~\ref{theo1} can be finished by inserting the upper bounds on the Haar coefficients of $\Delta_{\cP_{\bsa}}$ into Lemma~\ref{lpi}. This shows the upper bound. For details we refer to the paper \cite{HKP14} where the same method was applied (we remark that our Proposition~\ref{coro1} is a direct analog of \cite[Lemma~1]{HKP14}; hence the proof of Theorem~\ref{theo1} runs along the same lines as the proof of
\cite[Theorem 1]{HKP14} but with \cite[Lemma~1]{HKP14} replaced by Proposition~\ref{coro1}).

The matching lower bound is a consequence of $$L_p(\cP_{\bsa}) \ge L_1(\cP_{\bsa}) =\int_{[0,1]^2} | \Delta_{\cP_{\bsa}}(\bst)| \rd \bst \ge \left|\int_{[0,1]^2} \Delta_{\cP_{\bsa}}(\bst) \rd \bst\right|=|\mu_{(-1,-1),(0,0)}|$$ and item {\it (vi)} of Lemma~\ref{coro1}. 

\section{Appendix: Computation of the Haar coefficients $\mu_{\bsj,\bsm}$}

Let $\cP$ be an arbitrary $2^n$-element point set in the unit square. The Haar coefficients of its discrepancy function $\Delta_{\cP}$ are given as follows (see~\cite{hin2010}). We write $\bsz=(z_1,z_2)$. 

\begin{itemize}
  \item If $\bsj=(-1,-1)$, then 
   \begin{equation} \label{art1} \mu_{\bsj,\bsm}=\frac{1}{2^n}\sum_{\bsz\in \cP} (1-z_1)(1-z_2)-\frac14. \end{equation}
   \item If $\bsj=(j_1,-1)$ with $j_1\in \NN_0$, then 
   \begin{equation} \label{art2} \mu_{\bsj,\bsm}=-2^{-n-j_1-1}\sum_{\bsz\in \cP\cap I_{\bsj,\bsm}} (1-|2m_1+1-2^{j_1+1}z_1|)(1-z_2)+2^{-2j_1-3}. \end{equation}
    \item If $\bsj=(-1,j_2)$ with $j_2\in \NN_0$, then 
   \begin{equation} \label{art3} \mu_{\bsj,\bsm}=-2^{-n-j_2-1}\sum_{\bsz\in \cP\cap I_{\bsj,\bsm}} (1-|2m_2+1-2^{j_2+1}z_2|)(1-z_1)+2^{-2j_2-3}. \end{equation}
    \item If $\bsj=(j_1,j_2)$ with $j_1,j_2\in \NN_0$, then 
   \begin{align} \label{art4} \mu_{\bsj,\bsm}=&2^{-n-j_2-j_2-2}\sum_{\bsz\in \cP\cap I_{\bsj,\bsm}} (1-|2m_1+1-2^{j_1+1}z_1|)(1-|2m_2+1-2^{j_2+1}z_2|) \nonumber \\ &-2^{-2j_1-2j_2-4}. \end{align}
   \end{itemize}
	In all these identities the first summands involving the sum over $\bsz\in \cP\cap I_{\bsj,\bsm}$ come from the counting part $\frac{1}{N}\sum_{\bsz\in\cP}\bsone_{[\bszero,\bst)}(\bsz)$  and the second summands come from the linear part $-t_1t_2$ of the discrepancy function, respectively.
Note that we could also write $\bsz\in \mathring{I}_{\bsj,\bsm}$, where $\mathring{I}_{\bsj,\bsm}$ denotes the interior of $I_{\bsj,\bsm}$, since the summands in the formulas~\eqref{art2}--\eqref{art4} vanish if $\bsz$ lies on the boundary of the dyadic box. Hence, in order to compute the Haar coefficients of the discrepancy function, we have to deal with the sums over $\bsz$ which appear in the formulas above and to determine which points $\bsz=(z_1,z_2)\in \cP$ lie in the dyadic box $I_{\bsj,\bsm}$ with $\bsj\in \NN_{-1}^2$ and $\bsm=(m_1,m_2)\in\DD_{\bsj}$. If $m_1$ and $m_2$ are non-negative integers, then they have a dyadic expansion of the form
\begin{equation} \label{mdyadic} m_1=2^{j_1-1}r_1+\dots+r_{j_1}  \text{\,  and  \,}  m_2=2^{j_2-1}s_1+\dots+s_{j_2}  \end{equation}
with digits $r_{i_1},s_{i_2}\in\{0,1\}$ for all $i_1\in\{1,\dots,j_1\}$ and $i_2\in\{1,\dots,j_2\}$, respectively.
Let $\bsz=(z_1,z_2)=\big(\frac{t_n}{2}+\dots+\frac{t_1}{2^n},\frac{b_1}{2}+\dots+\frac{b_n}{2^n}\big)$ be a point of our point set $\cP_{\bsa}$. Then $\bsz\in \cP_{\bsa}\cap I_{\bsj,\bsm}$
if and only if 
\begin{equation} \label{cond} t_{n+1-k}=r_k \text{\, for all \,} k\in \{1,\dots, j_1\} \text{\, and \,} b_k=s_k \text{\, for all \,} k\in \{1,\dots, j_2\}. \end{equation}
Further, for such a point $\bsz=(z_1,z_2)\in I_{\bsj,\bsm}$ we have
\begin{equation} \label{z1} 2m_1+1-2^{j_1+1}z_1=1-t_{n-j_1}-2^{-1}t_{n-j_1-1}-\dots-2^{j_1-n+1}t_1 \end{equation}
and
\begin{equation} \label{z2} 2m_2+1-2^{j_2+1}z_2=1-b_{j_2+1}-2^{-1}b_{j_2+2}-\dots-2^{j_2-n+1}b_n. \end{equation}


There are several parallel tracks between the proofs in this section and the proofs in~\cite[Section 3]{Kritz},
where we computed the Haar coefficients for a simpler class of digital nets. \\
Let in the following $\mathcal{H}_j:=\{i\in\{1,\dots,j\}: a_i=0\}$ for $j\in\{1,\dots,n-1\}$. Then $h_n=|\mathcal{H}_{n-1}|$ is the parameter as defined in Theorem~\ref{theo1}.

\paragraph{Case 1: $\bsj\in\mathcal{J}_1:=\{(-1,-1)\}$}

\begin{proposition} \label{prop1}
  Let $\bsj\in \mathcal{J}_1$ and $\bsm\in \DD_{\bsj}$. Then we have
     $$ \mu_{\bsj,\bsm}=\frac{h_n+5}{2^{n+3}}+\frac{1}{2^{2n+2}}. $$
\end{proposition}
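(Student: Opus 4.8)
The plan is to use the formula \eqref{art1}, which for $\bsj=(-1,-1)$ gives
$$\mu_{(-1,-1),(0,0)}=\frac{1}{2^n}\sum_{\bsz\in\cP_{\bsa}}(1-z_1)(1-z_2)-\frac14,$$
and then to evaluate the sum explicitly using the concrete description \eqref{darstPa} of the point set. Writing $\bsz=(z_1,z_2)$ with $z_1=\sum_{k=1}^n t_k 2^{k-1-n}$ and $z_2=\sum_{k=1}^n b_k 2^{-k}$, where $b_k=t_k\oplus a_k(t_{k+1}\oplus\cdots\oplus t_n)$ for $k<n$ and $b_n=t_n$, the sum runs over all $(t_1,\dots,t_n)\in\{0,1\}^n$. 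The first thing I would do is separate the product: $\sum_{\bsz}(1-z_1)(1-z_2)=\sum_{\bsz}1-\sum_{\bsz}z_1-\sum_{\bsz}z_2+\sum_{\bsz}z_1z_2$. The first term is $2^n$; the second is $2^n$ times the average of $z_1$ over the net, and since the first coordinates are exactly $\{r/2^n: r=0,\dots,2^n-1\}$ this is $\sum_{r=0}^{2^n-1} r/2^n=(2^n-1)/2$. The third term $\sum_{\bsz}z_2$ is handled by noting that as $(t_1,\dots,t_n)$ ranges over $\{0,1\}^n$, the map $\bst\mapsto \bsb=(b_1,\dots,b_n)$ is a bijection of $\{0,1\}^n$ (since $C_2$ is non-singular), so the second coordinates are also a permutation of $\{r/2^n\}$ and $\sum_{\bsz}z_2=(2^n-1)/2$ as well.

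The heart of the computation, and the step I expect to be the main obstacle, is the cross term $\sum_{\bsz} z_1 z_2=\sum_{\bst}\big(\sum_{k=1}^n t_k 2^{k-1-n}\big)\big(\sum_{l=1}^n b_l 2^{-l}\big)$. Expanding, this is $\sum_{k,l} 2^{k-1-n-l}\sum_{\bst} t_k b_l$, so everything reduces to computing the pairwise correlations $N_{k,l}:=\sum_{\bst\in\{0,1\}^n} t_k b_l$ for $1\le k,l\le n$. Here is where the parameter $h_n$ enters. For $l=n$ we have $b_n=t_n$, so $N_{k,n}=\sum t_k t_n$, which equals $2^{n-1}$ if $k=n$ and $2^{n-2}$ if $k\neq n$. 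For $l<n$ we have $b_l=t_l\oplus a_l(t_{l+1}\oplus\cdots\oplus t_n)$; one has to compute $\EE[t_k b_l]$ over the uniform distribution on $\{0,1\}^n$. When $a_l=0$, $b_l=t_l$ and the correlation $N_{k,l}$ behaves just as in the Hammersley case (contributing a ``$\frac14$ on the diagonal, $\frac14$-ish off it'' pattern); when $a_l=1$, $b_l$ is a sum of several independent bits and the correlations with $t_k$ for $k\le l$ partially cancel. Carrying out this bookkeeping carefully — distinguishing the cases $a_l=0$ versus $a_l=1$, and within each the positions of $k$ relative to $l$ — will show that $\sum_{\bst} t_k b_l$ depends on $\bsa$ only through which of the $a_l$ with $l<n$ vanish, and summing the resulting geometric-type series over $k,l$ produces a clean closed form whose only $\bsa$-dependence is a term proportional to $h_n=|\{i: a_i=0\}|$.

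Once the three easy sums and the cross term are assembled, one gets
$$\frac{1}{2^n}\sum_{\bsz}(1-z_1)(1-z_2)=1-\frac{2^n-1}{2^n}+\frac{1}{2^n}\sum_{\bsz}z_1z_2,$$
and after substituting the closed form for $\sum_{\bsz}z_1z_2$ and subtracting $\frac14$, the terms independent of $n$ should collapse to give $\mu_{(-1,-1),(0,0)}=\frac{h_n+5}{2^{n+3}}+\frac{1}{2^{2n+2}}$. A useful sanity check along the way is the special case $\bsa=\bszero$, i.e.\ the Hammersley net, where $h_n=n-1$; the formula then predicts $\mu_{(-1,-1),(0,0)}=\frac{n+4}{2^{n+3}}+\frac{1}{2^{2n+2}}$, which must be consistent with the known exact $L_2$ discrepancy of $\cP^{\mathrm{Ham}}$ quoted in the introduction (the constant term $\frac{3}{8}$ under the root there being essentially the square of this leading Haar coefficient). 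I would verify this consistency, and also check $\bsa=\bsone$ ($h_n=0$) against \eqref{LpP1}, before trusting the general computation.
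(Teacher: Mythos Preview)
Your proposal is correct and follows essentially the same route as the paper: both start from \eqref{art1}, expand $(1-z_1)(1-z_2)$, use that the first and second coordinates each run through $\{r/2^n:0\le r<2^n\}$ to evaluate $\sum z_1$ and $\sum z_2$, and then reduce the cross term $\sum z_1z_2$ to the correlations $\sum_{\bst} t_k b_l$. The paper organizes this last step as a diagonal sum $S_1$ (where $h_n$ enters, exactly as you anticipate) plus an off-diagonal sum $S_2$ (which turns out to be independent of $\bsa$, since $\sum_{\bst} t_{k_1}b_{k_2}=2^{n-2}$ for all $k_1\neq k_2$), which is precisely the case split you describe.
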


\begin{proof}
  By~\eqref{art1} we have
	\begin{align*}
	   \mu_{\bsj,\bsm}=& \frac{1}{2^n}\sum_{\bsz\in \cP_{\bsa}} (1-z_1)(1-z_2)-\frac14 \\
		  =&1-\frac{1}{2^n}\sum_{\bsz\in \cP_{\bsa}}z_1-\frac{1}{2^n}\sum_{\bsz\in \cP_{\bsa}}z_2+\frac{1}{2^n}\sum_{\bsz\in \cP_{\bsa}}z_1z_2-\frac14 \\
			=&-\frac14+\frac{1}{2^n}+\frac{1}{2^n}\sum_{\bsz\in \cP_{\bsa}}z_1z_2,
	\end{align*}
	where we regarded $\sum_{\bsz\in \cP_{\bsa}}z_1=\sum_{\bsz\in \cP_{\bsa}}z_2=\sum_{l=0}^{2^n-1}l/2^n=2^{n-1}-2^{-1}$ in the last step.
	It remains to evaluate $\sum_{\bsz\in \cP_{\bsa}}z_1z_2$. Using the representation of $\cP_{\bsa}$ in \eqref{darstPa}, we have
	\begin{align*}
	  \sum_{\bsz\in \cP}z_1z_2=& \sum_{t_1,\dots,t_n=0}^1 \left(\frac{t_n}{2}+\dots+\frac{t_{1}}{2^n}\right)\left(\frac{b_1}{2}+\dots+\frac{b_n}{2^n}\right) \\
		 =&\sum_{k=1}^n \sum_{t_1,\dots,t_n=0}^1 \frac{t_kb_k}{2^{n+1-k}2^k}+\sum_{\substack{k_1,k_2=1 \\ k_1 \neq k_2}}^n \sum_{t_1,\dots,t_n=0}^1 \frac{t_{k_1}b_{k_2}}{2^{n+1-k_1}2^{k_2}}=:S_1+S_2.
		\end{align*}
		Note that $b_k$ only depends on $t_k,t_{k+1},\ldots,t_n$ and $b_n=t_n$. We have
		\begin{align*}
		   S_1=& \frac{1}{2^{n+1}}\sum_{k=1}^n 2^{k-1} \sum_{t_k\dots,t_n=0}^1 t_kb_k=\frac{1}{2^{n+2}}2^n \sum_{t_n=0}^1 t_nb_n+\frac{1}{2^{n+2}}\sum_{k=1}^{n-1} 2^k \sum_{t_k\dots,t_n=0}^1 t_kb_k \\
			=& \frac14+\frac{1}{2^{n+2}}\sum_{k=1}^{n-1} 2^k \sum_{t_{k+1}\dots,t_n=0}^1 (1\oplus a_k(t_{k+1}\oplus \dots \oplus t_n)) \\
			=& \frac14+\frac{1}{2^{n+2}}\sum_{k=1}^{n-1} 2^k 2^{n-k-1}(2-a_k)=\frac14+\frac18 \left(n-1+\sum_{k=1}^{n-1}(1-a_k)\right)=\frac18(n+h_n+1).
		\end{align*}
		To compute $S_2$, assume first that $k_1<k_2$. Then
		\begin{align*}
	\sum_{t_1,\dots,t_n=0}^1 t_{k_1}b_{k_2}=& 2^{k_1-1} \sum_{t_{k_1},\dots,t_n=0}^1 t_{k_1}b_{k_2}=2^{k_1-1} \sum_{t_{k_1+1},\dots,t_n=0}^1 b_{k_2}\\
	=&2^{k_1-1}2^{k_2-k_1-1}\sum_{t_{k_2},\dots,t_n=0}^1 b_{k_2}=2^{k_1-1}2^{k_2-k_1-1}2^{n-k_2}=2^{n-2}.
		\end{align*}
		Similarly, we observe that we obtain the same result also for $k_1>k_2$ and hence
		$$ S_2=\frac{1}{2^{n+1}}\sum_{\substack{k_1,k_2=0 \\ k_1 \neq k_2}}^n 2^{k_1-k_2}2^{n-2}=\frac18 \sum_{\substack{k_1,k_2=0 \\ k_1 \neq k_2}}^n 2^{k_1-k_2}=\frac{1}{8}\left(-n+2^{n+1}-4+\frac{2}{2^n}\right). $$
		Now we put everything together to arrive at the claimed formula. \end{proof}

\paragraph{Case 2: $\bsj\in\mathcal{J}_2:=\{(-1,j_2): 0\leq j_2 \leq n-2\}$}
\begin{proposition} \label{prop2}
Let $\bsj=(-1,j_2)\in \mathcal{J}_2$ and $\bsm\in \DD_{\bsj}$. If $\mathcal{H}_{j_2}=\{1,\dots,j_2\}$, then
 $$ \mu_{\bsj,\bsm}=2^{-2n-2j_2-4}\left(-2^{2j_2+2}(a_{j_2+1}-1)+2^{n+j_2}(a_{j_2+1}a_{j_2+2}-2)+2^{2n+2}\sum_{k=1}^{j_2}\frac{s_k}{2^{n+1-k}}\right), $$
where the latter sum is zero for $j_2=0$.
Otherwise, let $w\in\{1,\dots,j_2\}$ be the greatest index with $a_w=1$. If $a_{j_2+1}=0$, then
\begin{align*} \mu_{\bsj,\bsm}=&2^{-2n-2}-2^{-n-j_2-3}+2^{-n-2j_2+w-5}+2^{-2j_2-2}\varepsilon \\ &+2^{-2n-j_2+w-4}a_{j_2+2}(1-2(s_{w}\oplus \dots \oplus s_{j_2})). \end{align*}
If $a_{j_2+1}=1$, then
\begin{align*} \mu_{\bsj,\bsm}=&-2^{-n-j_2-3}+2^{-j_2+w-2n-3}+2^{-2j_2-n+w-4}+2^{-2j_2-2}\varepsilon \\ &-2^{-2n-j_2+w-2}(s_{w}\oplus \dots \oplus s_{j_2})+2^{-n-j_2-4}a_{j_2+2}. \end{align*}
In the latter two expressions, we put $\varepsilon=\sum_{\substack{k=1 \\k\neq w} }^{j_2}\frac{t_k(m_2)}{2^{n+1-k}}$, where the values $t_k(m_2)$ depend only on $m_2$ and are either 0 or 1. Hence, in any case we have $|\mu_{\bsj,\bsm}|\lesssim 2^{-n-j_2}$.
\end{proposition}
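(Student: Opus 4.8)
\textbf{Proof proposal for Proposition~\ref{prop2}.}

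The plan is to start from the closed formula \eqref{art3} for the Haar coefficient $\mu_{\bsj,\bsm}$ with $\bsj=(-1,j_2)$, namely
$$\mu_{\bsj,\bsm}=-2^{-n-j_2-1}\sum_{\bsz\in\cP_{\bsa}\cap I_{\bsj,\bsm}}(1-|2m_2+1-2^{j_2+1}z_2|)(1-z_1)+2^{-2j_2-3},$$
and to evaluate the sum over the points of $\cP_{\bsa}$ that lie in the box $I_{(-1,j_2),(0,m_2)}=[0,1)\times I_{j_2,m_2}$. The first step is to use the membership condition \eqref{cond}: since $j_1=-1$ there is no constraint on the first coordinate, so $t_1,\dots,t_n$ range freely subject only to $b_k=s_k$ for $k\in\{1,\dots,j_2\}$, where $s_1,\dots,s_{j_2}$ are the dyadic digits of $m_2$ as in \eqref{mdyadic}. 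Using $b_k=t_k\oplus a_k(t_{k+1}\oplus\cdots\oplus t_n)$, I would solve these equations recursively from $k=j_2$ downward: each equation determines $t_k$ in terms of $s_k$ and the higher-index $t$'s, so the free parameters are $t_{j_2+1},\dots,t_n$, giving $2^{n-j_2}$ points in the box, and each $t_k$ for $k\le j_2$ becomes an explicit affine (mod 2) function of $s_k$ and $t_{j_2+1}\oplus\cdots\oplus t_n$. This is where the case distinction enters: if all $a_1,\dots,a_{j_2}$ equal $0$ (the case $\mathcal{H}_{j_2}=\{1,\dots,j_2\}$) then simply $t_k=s_k$ for $k\le j_2$ and the first coordinate $z_1$ has a clean dependence on the digits; otherwise the largest index $w\le j_2$ with $a_w=1$ plays a special role because $t_w$ is the place where the XOR-tail $t_{j_2+1}\oplus\cdots\oplus t_n$ genuinely couples into $z_1$, producing the terms involving $(s_w\oplus\cdots\oplus s_{j_2})$ and the auxiliary quantity $\varepsilon$.

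The second step is to plug \eqref{z2} into the factor $1-|2m_2+1-2^{j_2+1}z_2|$. By \eqref{z2} we have $2m_2+1-2^{j_2+1}z_2=1-b_{j_2+1}-2^{-1}b_{j_2+2}-\cdots-2^{j_2-n+1}b_n$, and since this quantity lies in $(-1,1)$ one rewrites $1-|\cdot|$ as a function of $b_{j_2+1},\dots,b_n$; a standard manipulation (used analogously in \cite{Kritz}) turns the sum $\sum 2^{j_2+1-n}(\cdots)$ over these digits into a product of geometric-type factors, and because $b_{j_2+1},\dots,b_n$ are themselves determined by the free parameters $t_{j_2+1},\dots,t_n$ together with $a_{j_2+1},a_{j_2+2},\dots$, one can carry out the summation over $t_{j_2+1},\dots,t_n$ explicitly. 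The cleanest route is to separate the sum over $\bsz\in\cP_{\bsa}\cap I_{\bsj,\bsm}$ into $\sum(1-|2m_2+1-2^{j_2+1}z_2|)$ times averaged contributions of $(1-z_1)$; here $z_1=\frac{t_n}{2}+\cdots+\frac{t_1}{2^n}$ splits as the ``high part'' $\frac{t_n}{2}+\cdots+\frac{t_{j_2+1}}{2^{n-j_2}}$, which interacts with the $b$-digits, plus the ``low part'' $\frac{t_{j_2}}{2^{n-j_2+1}}+\cdots+\frac{t_1}{2^n}$, which after the substitution $t_k=t_k(s_k,\text{tail})$ contributes the explicit term $\varepsilon$ (respectively the $s_w$-term). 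The dependence on $a_{j_2+1}$ comes from the leading digit $b_{j_2+1}=t_{j_2+1}\oplus a_{j_2+1}(t_{j_2+2}\oplus\cdots\oplus t_n)$, which is why the formula branches into the $a_{j_2+1}=0$ and $a_{j_2+1}=1$ subcases with different constant terms.

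The third step is pure bookkeeping: collect the powers of $2$, reconcile the geometric sums, and add the linear term $2^{-2j_2-3}$ from the second summand of \eqref{art3}; this should reproduce the stated expressions verbatim. Finally, for the qualitative conclusion $|\mu_{\bsj,\bsm}|\lesssim 2^{-n-j_2}$ one notes that in every branch the terms either already carry a factor $2^{-n-j_2}$ or better (e.g.\ $2^{-2n-2}$, $2^{-n-j_2-3}$, $2^{-2n-j_2+w-\cdots}$ with $w\le j_2$ so the exponent is $\le -2n$), while the remaining terms $2^{-2j_2-2}\varepsilon$ and $2^{-n-2j_2+w-5}$ are bounded by a constant times $2^{-n-j_2}$ because $\varepsilon=\sum_{k\le j_2,k\ne w}t_k(m_2)2^{-(n+1-k)}\le 2^{-n-1}\cdot 2^{j_2}$ and $w\le j_2$, $j_2\le n-2$. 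The main obstacle I anticipate is the second step: correctly tracking how the digits $t_1,\dots,t_{j_2}$ get rewritten once the linear system $b_k=s_k$ is solved, and in particular isolating the single ``bad'' index $w$ at which the XOR-tail enters $z_1$ — getting the parity term $(s_w\oplus\cdots\oplus s_{j_2})$ and the sign of the associated $a_{j_2+2}$-correction right is the delicate part, and it is essentially the only place where genuine care (as opposed to mechanical summation) is required.
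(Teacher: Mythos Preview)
Your plan is correct and matches the paper's own proof essentially step for step: start from \eqref{art3}, solve the constraints $b_k=s_k$ for $k\le j_2$ to find that $t_k$ for $k\neq w$ is fixed by the $s_i$'s while $t_w$ alone carries the dependence on the tail $t_{j_2+1}\oplus\cdots\oplus t_n$, then sum first over $t_{j_2+1}$ (which is where the $a_{j_2+1}$ branch appears) and evaluate the remaining sums over $t_{j_2+2},\dots,t_n$ of the auxiliary quantities $u$, $v$, and $T_{j_2+2}v$. The only cosmetic difference is that the paper does not pass through any ``product of geometric-type factors'' but works by direct expansion and termwise summation; your estimate for the final bound $|\mu_{\bsj,\bsm}|\lesssim 2^{-n-j_2}$ is exactly the right one.
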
 

\begin{proof}
We only show the case where $j_2\geq 1$ and $\cH_{j_2} \neq \{1,\dots,j_2\}$, since the other case is similar but easier.
  Let $w\in\{1,\dots,j_2\}$ be the greatest index with $a_w=1$. By~\eqref{art3}, we need to evaluate the sum
	   $$ \sum_{\bsz\in \cP_{\bsa}\cap I_{\bsj,\bsm}}(1-z_1)(1-|2m_2+1-2^{j_2+1}z_2|). $$
	By~\eqref{cond}, the condition $\bsz\in \cP_{\bsa}\cap I_{\bsj,\bsm}$ yields the identities $b_k=s_k$ for all $k\in \{1,\dots,j_2\}$, which lead to 
 $t_k=s_k$ for all $k\in\{1,\dots,j_2\}$ such that $a_k=0$. Assume
	that $$ \{k\in \{1,\dots,j_2\}: a_k=1\}=\{k_1,\dots,k_l\} $$ for some $l\in\{1,\dots j_2\}$, where $k_1<k_2<\dots<k_l$ and $k_l=w$.
	We have $t_{k_i}=s_{k_i}\oplus s_{k_i+1}\oplus \dots \oplus s_{k_{i+1}}$ for all $i\in\{1,\dots,l-1\}$ and
	$t_w=s_w \oplus \dots \oplus s_{j_2} \oplus t_{j_2+1}\oplus \dots \oplus t_n$. Hence, we can write
	$$ 1-z_1=1-u-\frac{t_{j_2+1}}{2^{n-j_2}}-\frac{s_w \oplus \dots \oplus s_{j_2} \oplus t_{j_2+1}\oplus \dots \oplus t_n}{2^{n+1-w}}-\varepsilon, $$
	where $u=2^{-1}t_n+\dots+2^{-(n-j_2-1)}t_{j_2+2}$ and $$\varepsilon=\varepsilon(m_2)=\sum_{\substack{k=1 \\k\neq w} }^{j_2}\frac{t_k(m_2)}{2^{n+1-k}}.$$
	For the expression $1-|2m_2+1-2^{j_2+1}z_2|$ we find by~\eqref{z2}
	$$ 1-|2m_2+1-2^{j_2+1}z_2|=1-|1-t_{j_2+1}\oplus a_{j_2+1}(t_{j_2+2}\oplus \dots \oplus t_n)-v|, $$
	where $v=v(t_{j_2+2},\dots,t_n)=2^{-1}b_{j_2+2}+\dots+2^{-(n-j_2-1)}b_n.$ With these observations, we find (writing $T_j=t_j\oplus \dots\oplus t_n$ for $1\leq j \leq n-1$ and $t_w(t_{j_2+1})=s_{w}\oplus\dots\oplus s_{j_2}\oplus t_{j_2+1} \oplus T_{j_2+2}$)
	\begin{align*}
	  \sum_{\bsz\in \cP_{\bsa}\cap I_{\bsj,\bsm}}&(1-z_1)(1-|2m_2+1-2^{j_2+1}z_2|) \\
		    =& \sum_{t_{j_2+1}, \dots , t_n=0}^{1}\left(1-u-\frac{t_{j_2+1}}{2^{n-j_2}}-\frac{t_w(t_{j_2+1})}{2^{n+1-w}}-\varepsilon\right) \\
				  &\times\left(1-|1-t_{j_2+1}\oplus a_{j_2+1}T_{j_2+2}-v|\right) \\
				=&\sum_{t_{j_2+2}, \dots , t_n=0}^{1} \bigg\{ \left(1-u-\frac{a_{j_2+1}T_{j_2+2}}{2^{n-j_2}}-\frac{t_w(a_{j_2+1}T_{j_2+1})}{2^{n+1-w}}-\varepsilon\right)v \\
				 &+ \left(1-u-\frac{a_{j_2+1}T_{j_2+2}\oplus 1}{2^{n-j_2}}-\frac{t_w(a_{j_2+1}T_{j_2+1}\oplus 1)}{2^{n+1-w}}-\varepsilon\right)(1-v) \bigg\}\\
			=& \sum_{t_{j_2+2}, \dots , t_n=0}^{1} 2^{-n-1}\bigg(-2^{j_2+1}-2^w+2^{n+1}-2^{n+1}\varepsilon+2^wt_w(a_{j_2+1}T_{j_2+1})-2^{n+1}u \\ &+2^{j_2+1}v+2^w v 
			-2^{w+1}t_w(a_{j_2+1}T_{j_2+1})v-2^{j_2+1}(a_{j_2+1}T_{j_2+2})(2v-1)\bigg).
	\end{align*}
	Let first $a_{j_2+1}=1$ and hence $t_w(a_{j_2+1}T_{j_2+2})=t_w(T_{j_2+2})=s_{w}\oplus\dots\oplus s_{j_2}$ does not depend on t $t_i$. 
	Since
	$$ \sum_{t_{j_2+2}, \dots , t_n=0}^{1} u=\sum_{t_{j_2+2}, \dots , t_n=0}^{1} v=\sum_{l=0}^{2^{n-j_2-1}-1}\frac{l}{2^{n-j_2+1}}=2^{n-j_2-2}-\frac12, $$
	we obtain
	\begin{align*}
	  \sum_{\bsz\in \cP_{\bsa}\cap I_{\bsj,\bsm}}&(1-z_1)(1-|2m_2+1-2^{j_2+1}z_2|) \\
		 =& 2^{-n-1}\bigg((-2^{j_2+1}-2^w+2^{n+1}-2^{n+1}\varepsilon+2^wt_w(T_{j_2+2}))2^{n-j_2-1} \\
		 &+(2^w+2^{j_2+1}-2^{n+1}-2^{w+1}t_w(T_{j_2+2}))\left(2^{n-j_2-2}-\frac12\right) \\
		&-2^{j_2+1}\sum_{t_{j_2+2}, \dots , t_n=0}^{1}T_{j_2+2}(2v-1)\bigg).
		\end{align*}
	We analyze the last expression. We find
	\begin{align*} \sum_{t_{j_2+2}, \dots , t_n=0}^{1}& T_{j_2+2}(2v-1) \\ =& 2\sum_{t_{j_2+2}, \dots , t_n=0}^{1}T_{j_2+2}v-\sum_{t_{j_2+2}, \dots , t_n=0}^{1}T_{j_2+2}=2\sum_{t_{j_2+2}, \dots , t_n=0}^{1}T_{j_2+1}v-2^{n-j_2-2},\end{align*}
	where
	\begin{align*}
	  \sum_{t_{j_2+2}, \dots , t_n=0}^{1}T_{j_2+2}v=&\sum_{t_{j_2+2}, \dots , t_n=0}^{1} (t_{j_2+2}\oplus T_{j_2+3})\left(\frac{t_{j_2+2}\oplus a_{j_2+2} T_{j_2+3}}{2}+\frac{b_{j_2+3}}{4}+\cdots+\frac{b_n}{2^{n-j_1-1}}\right) \\
		=& \sum_{t_{j_2+3}, \dots , t_n=0}^{1} \left(\frac{(T_{j_2+3}\oplus 1)\oplus a_{j_2+2} T_{j_2+3}}{2}+\frac{b_{j_2+3}}{4}+\cdots+\frac{b_n}{2^{n-j_2-1}}\right) \\
		=& \sum_{t_{j_2+3}, \dots , t_n=0}^{1} \frac{1\oplus (1-a_{j_2+2})T_{j_2+3}}{2}+\sum_{l=0}^{2^{n-j_2-2}-1}\frac{l}{2^{n-j_2-1}} \\
		=&\frac12 \sum_{t_{j_2+3}, \dots , t_n=0}^{1} \left(1-(1-a_{j_2+2})T_{j_2+3}\right)+2^{n-j_2-4}-\frac14 \\
		=&\frac12 \left(2^{n-j_2-2}-(1-a_{j_2+2})2^{n-j_2-3}\right)+2^{n-j_2-4}-\frac14 \\
		=&2^{n-j_2-4}(1+a_{j_2+2})+2^{n-j_2-4}-\frac14.
	\end{align*}
	We put everything together and apply~\eqref{art3} to find the result for $a_{j_2+1}=1$. \\
	Now assume that $a_{j_2+1}=0$. Then $t_w(a_{j_2+1}T_{j_2+2})=t_w(0)=s_w\oplus\dots\oplus s_{j_2} \oplus T_{j_2+2}$. Hence we have
		\begin{align*}
	  \sum_{\bsz\in \cP_{\bsa}\cap I_{\bsj,\bsm}}&(1-z_1)(1-|2m_2+1-2^{j_2+1}z_2|) \\
		  =& 2^{-n-1}\bigg( (-2^{j_1+1}+2^{n+1}-2^w-2^{n+1}\varepsilon)2^{n-j_2-1}+(2^{j_2+1}-2^{n+1})(2^{n-j_2-2}-\frac12) \\
			      &+ 2^w \cdot 2^{n-j_2-2}-2^{w+1}\sum_{t_{j_2+2},\dots,t_n=0}^{1} vt_w(0)\bigg).
		\end{align*}
		We considered $\sum_{t_{j_2+2},\dots,t_n=0}^{1}t_w(0)=2^{n-j_2-2}.$ It remains to evaluate $\sum_{t_{j_2+2},\dots,t_n=0}^{1} vt_w(0).$ We find
		\begin{eqnarray*}
		    \lefteqn{\sum_{t_{j_2+2},\dots,t_n=0}^{1} (s_w\oplus\dots\oplus s_{j_2}\oplus t_{j_2+2}\oplus T_{j_2+3})\left(\frac{t_{j_2+2}\oplus a_{j_2+2}T_{j_2+3}}{2}+\frac{b_{j_2+2}}{4}+\dots+\frac{b_n}{2^{n-j_2-1}}\right)} \\
				  &=& \sum_{t_{j_2+3},\dots,t_n=0}^{1} \left(\frac{(s_w\oplus\dots\oplus s_{j_2}\oplus T_{j_2+3} \oplus 1)\oplus a_{j_2+1}T_{j_2+3}}{2}+\frac{b_{j_2+2}}{4}+\dots+\frac{b_n}{2^{n-j_2-1}}\right) \\
					&=& \frac12 \sum_{t_{j_2+3},\dots,t_n=0}^{1} (1-a_{j_2+2})T_{j_2+3}\oplus s_w\oplus\dots\oplus s_{j_2} \oplus 1 +\sum_{l=0}^{2^{n-j_2-2}-1}\frac{l}{2^{n-j_2-1}} \\
					&=& 2^{n-j_2-4}(1+a_{j_2+2}(1-2(s_w\oplus\dots\oplus s_{j_2})) +2^{n-j_2-4}-\frac14.
		\end{eqnarray*}
		Again, we put everything together and apply~\eqref{art3} to find the result for $a_{j_2+1}=0$. 
\end{proof}

\paragraph{Case 3: $\bsj\in\mathcal{J}_3:=\{(k,-1): k\geq n\}\cup\{(-1,k): k\geq n\}$}

\begin{proposition}
  Let $\bsj\in \mathcal{J}_3$ and $\bsm\in \DD_{\bsj}$. Then we have
     $$ \mu_{\bsj,\bsm}=\frac{1}{2^{2k+3}}. $$
\end{proposition}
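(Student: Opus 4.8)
The plan is to reduce, via the explicit formulas~\eqref{art2} and~\eqref{art3} for the Haar coefficients, to showing that the counting sum over $\cP_{\bsa}\cap I_{\bsj,\bsm}$ vanishes, and then to exploit the fact that each one-dimensional coordinate projection of $\cP_{\bsa}$ is exactly the full dyadic grid $\{0,1/2^n,\dots,(2^n-1)/2^n\}$. Consider first $\bsj=(k,-1)$ with $k\ge n$ and $\bsm=(m_1,0)$, $m_1\in\DD_k$. By~\eqref{art2} we have $\mu_{\bsj,\bsm}=-2^{-n-k-1}\Sigma+2^{-2k-3}$, where $\Sigma=\sum_{\bsz\in\cP_{\bsa}\cap I_{\bsj,\bsm}}(1-|2m_1+1-2^{k+1}z_1|)(1-z_2)$, so it suffices to prove $\Sigma=0$.

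For this I would use that, by~\eqref{darstPa}, as $(t_1,\dots,t_n)$ runs over $\ZZ_2^n$ the first coordinate $z_1=t_n/2+\dots+t_1/2^n$ satisfies $2^n z_1=2^{n-1}t_n+2^{n-2}t_{n-1}+\dots+t_1$, so $z_1$ runs bijectively through $\{0,1/2^n,\dots,(2^n-1)/2^n\}$. Since $k\ge n$, the interval $I_{k,m_1}=[m_1/2^k,(m_1+1)/2^k)$ has length $2^{-k}\le 2^{-n}$ and therefore contains at most one point of this grid; moreover, if it contains one, a short divisibility check (a grid point $j/2^n$ lies in $I_{k,m_1}$ iff $2^{k-n}\mid m_1$ and $j=m_1/2^{k-n}$) shows it must be the left endpoint $m_1/2^k$. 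Hence either $\cP_{\bsa}\cap I_{\bsj,\bsm}=\emptyset$ and $\Sigma$ is an empty sum, or $\cP_{\bsa}\cap I_{\bsj,\bsm}$ consists of a single point $\bsz$ with $z_1=m_1/2^k$; for this point $2m_1+1-2^{k+1}z_1=2m_1+1-2m_1=1$, so $1-|2m_1+1-2^{k+1}z_1|=0$ and the single summand vanishes. (This is precisely the boundary-vanishing phenomenon already noted right after~\eqref{art4}: a point of $\cP_{\bsa}$ on the boundary of the dyadic box contributes $0$.) In both cases $\Sigma=0$, so $\mu_{\bsj,\bsm}=2^{-2k-3}$.

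For the remaining case $\bsj=(-1,k)$ with $k\ge n$ and $\bsm=(0,m_2)$ the argument is identical once we use~\eqref{art3} in place of~\eqref{art2}; the only extra ingredient is that the \emph{second} coordinates of $\cP_{\bsa}$ also run through the complete grid $\{0,1/2^n,\dots,(2^n-1)/2^n\}$. This holds because the map $(t_1,\dots,t_n)\mapsto(b_1,\dots,b_n)$ with $b_k=t_k\oplus a_k(t_{k+1}\oplus\dots\oplus t_n)$ for $k<n$ and $b_n=t_n$ is a bijection of $\ZZ_2^n$ (invert from the bottom: $t_n=b_n$, then $t_{n-1}=b_{n-1}\oplus a_{n-1}t_n$, and so on up), and $2^n z_2=2^{n-1}b_1+\dots+b_n$. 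The same reasoning as above then forces the counting sum in~\eqref{art3} to vanish, giving $\mu_{\bsj,\bsm}=2^{-2k-3}$.

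I expect no real obstacle here: this is the easiest of the cases in the appendix, and it is in fact the ``generic'' value $2^{-2k-3}$ that would arise for any net whose coordinate projections fill the dyadic grid. The only two points that deserve an explicit sentence are (i) recording that both coordinate projections of $\cP_{\bsa}$ equal $\{0,1/2^n,\dots,(2^n-1)/2^n\}$, and (ii) the clean cancellation coming from the unique point of $\cP_{\bsa}$ in $I_{k,m}$ (when it exists) lying on the left boundary $m/2^k$, which the paper has already flagged.
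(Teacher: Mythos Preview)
Your proof is correct and follows essentially the same approach as the paper: both arguments reduce to the observation that no point of $\cP_{\bsa}$ lies in the interior of $I_{\bsj,\bsm}$ when $k\ge n$, so that only the linear part $2^{-2k-3}$ survives in~\eqref{art2} or~\eqref{art3}. The paper states this in one sentence, whereas you spell out the underlying reason (both coordinate projections of $\cP_{\bsa}$ are exactly the grid $\{j/2^n:0\le j<2^n\}$, and any grid point in a dyadic interval of length $\le 2^{-n}$ must be its left endpoint), which is a welcome clarification but not a different method.
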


\begin{proof}
 This claim follows from~\eqref{art2} and~\eqref{art3} together with the fact that no point of $\cP_{\bsa}$ is contained 
in the interior of $I_{\bsj,\bsm}$ if $j_1\geq n$ or $j_2\geq n$. Hence, only the linear part of $\Delta_{\cP_{\bsa}}$ 
contributes to the Haar coefficients in this case.
\end{proof}

\paragraph{Case 4: $\bsj\in\mathcal{J}_4:=\{(0,-1)\}$}

\begin{proposition} \label{prop5}
  Let $\bsj\in \mathcal{J}_4$ and $\bsm\in \DD_{\bsj}$. Then we have
     $$ \mu_{\bsj,\bsm}=-\frac{1}{2^{n+3}}+\frac{1}{2^{2n+2}}. $$
\end{proposition}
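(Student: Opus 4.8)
## Proof plan for Proposition~\ref{prop5}

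The plan is to specialize the general formula~\eqref{art2} to the case $\bsj=(0,-1)$, $\bsm=(0,0)$, which is the only element of $\DD_{\bsj}$ when $j_1=0$. With $j_1=0$ the dyadic box $I_{\bsj,\bsm}$ is all of $[0,1)^2$, so the sum over $\bsz\in\cP_{\bsa}\cap I_{\bsj,\bsm}$ runs over the entire net. Formula~\eqref{art2} then reads
$$\mu_{\bsj,\bsm}=-\frac{1}{2^{n+1}}\sum_{\bsz\in\cP_{\bsa}}\left(1-|1-2z_1|\right)(1-z_2)+\frac{1}{8},$$
using $2m_1+1-2^{j_1+1}z_1=1-2z_1$ and $2^{-2j_1-3}=2^{-3}$. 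By~\eqref{z1} with $j_1=0$ we have $1-|1-2z_1|=1-|1-t_n-2^{-1}t_{n-1}-\cdots-2^{1-n}t_1|$, which I would simplify by splitting on the value of $t_n$: when $t_n=1$ the absolute value collapses and the factor equals $2^{-1}t_{n-1}+\cdots+2^{1-n}t_1$, while when $t_n=0$ it equals $1-(2^{-1}t_{n-1}+\cdots+2^{1-n}t_1)$. In other words $1-|1-2z_1|=t_n + (1-2t_n)(2z_1-\lfloor 2z_1\rfloor)$, i.e. it is the ``tent'' applied to $z_1$; but the cleanest route is simply to note that as $(t_1,\dots,t_{n-1})$ ranges over $\ZZ_2^{n-1}$, the quantity $u:=2^{-1}t_{n-1}+\cdots+2^{1-n}t_1$ ranges over $\{l/2^{n-1}:0\le l\le 2^{n-1}-1\}$, each value attained once for each choice of $t_n$.

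Next I would carry out the double summation. Write $\bsz=(z_1,z_2)$ with $z_2=b_1/2+\cdots+b_n/2^n$ where the $b_k$ depend on $(t_1,\dots,t_n)$ as in~\eqref{darstPa}. The sum factors according to whether $t_n=0$ or $t_n=1$:
$$\sum_{\bsz\in\cP_{\bsa}}(1-|1-2z_1|)(1-z_2)=\sum_{t_n=0}^{1}\sum_{t_1,\dots,t_{n-1}=0}^{1}\Big(t_n+(1-2t_n)u\Big)(1-z_2).$$
I would handle the two pieces separately: the piece $\sum (1-z_2)$ over the whole net equals $2^n - \sum_{\bsz}z_2 = 2^n-(2^{n-1}-1/2)$ since the second coordinates of $\cP_{\bsa}$ are a permutation of $\{l/2^n\}$ (this uses that $C_2$ is NUT, hence invertible, so $z_2$ runs over all of $\{l/2^n: 0\le l\le 2^n-1\}$). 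The piece weighted by $u$ requires $\sum_{t_n}\sum_{t_1,\dots,t_{n-1}} (1-2t_n)\,u\,(1-z_2)$; here I would use that $b_k = t_k\oplus a_k(t_{k+1}\oplus\cdots\oplus t_n)$, so for fixed $t_n$ the map $(t_1,\dots,t_{n-1})\mapsto(b_1,\dots,b_{n-1})$ is a bijection of $\ZZ_2^{n-1}$, and moreover $z_2$ and $u$ become essentially independent sums — $u$ depends on $t_1,\dots,t_{n-1}$ and $z_2$ on $b_1,\dots,b_n$, with $b_n=t_n$ fixed. Averaging $u$ over the cube gives $\frac12(2^{n-1}-1)/2^{n-1}$ times $2^{n-1}$... this is exactly the kind of routine bookkeeping I would grind through, keeping careful track of the $2^{-n}$ and $2^{-n-1}$ prefactors.

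The main obstacle I anticipate is \emph{not} conceptual but purely the arithmetic of correctly combining the constant term $1/8$ with the two contributions, making sure the $h_n$-dependent terms cancel (they must, since the stated answer $-2^{-n-3}+2^{-2n-2}$ does not involve $h_n$). In fact the cleanest sanity check, which I would use to organize the computation, is that this case is a degenerate instance of Proposition~\ref{prop2}: setting $j_2=0$ there (after swapping the roles of the two coordinates via~\eqref{art3} versus~\eqref{art2}) and noting $\mathcal{H}_0=\emptyset$, the analysis should reduce to the same formula. Alternatively, one can observe directly that when $j_1=0$ the weight $1-|1-2z_1|$ depends on $z_1$ in a way that is symmetric under $t_n\mapsto 1\oplus t_n$ combined with $u\mapsto$ its complement, and this symmetry forces the $h_n$-terms to drop out. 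I would present the direct computation via~\eqref{art2} as the primary argument, since it is self-contained and short once the independence of the $u$-sum and the $z_2$-sum is observed, and the final step is just collecting $-\frac{1}{2^{n+1}}\big(2^{n-1}-\tfrac12 + (\text{correction})\big)+\frac18 = -\frac{1}{2^{n+3}}+\frac{1}{2^{2n+2}}$.
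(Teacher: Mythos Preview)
Your approach is essentially the paper's: apply \eqref{art2} with $j_1=0$, split the tent factor on $t_n$ to get $u$ when $t_n=0$ and $1-u$ when $t_n=1$, and then evaluate the resulting sums using that $(b_1,\dots,b_{n-1})$ is a bijective image of $(t_1,\dots,t_{n-1})$ for each fixed $t_n$. The paper writes $1-z_2=1-v(t_n)-t_n/2^n$ and reduces everything to the single cross term $\sum_{t_1,\dots,t_{n-1}}(uv(1)-uv(0))$.

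The one place your sketch needs tightening is the claim that ``$z_2$ and $u$ become essentially independent sums.'' They are not independent: for fixed $t_n$ both are functions of the \emph{same} tuple $(t_1,\dots,t_{n-1})$, just read through different coordinates, so $\sum u\cdot z_2$ does not factor. What actually happens is not independence but a near-cancellation between the $t_n=0$ and $t_n=1$ contributions: the off-diagonal terms $\sum t_{k_1}b_{k_2}$ with $k_1\neq k_2$ are genuinely insensitive to $t_n$ (sum over $t_{k_2}$ first), while the diagonal terms $\sum t_k b_k(t_n)$ depend on $t_n$ only through the single index $k=n-1$. This is the step the paper labels ``not difficult to observe that $\sum uv(0)=\sum uv(1)$''; it is precisely where the computation is delicate, and your description of it as routine bookkeeping with independent factors would lead you astray if taken literally. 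Your instinct that the $h_n$-dependence must cancel is correct and is a good guide, but the mechanism is this diagonal/off-diagonal split rather than any product structure. (Your proposed cross-check via Proposition~\ref{prop2} does not transfer directly, since the two coordinates use different generating matrices and the roles of $C_1$ and $C_2$ are not symmetric.)
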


\begin{proof}
For $\bsz=(z_1,z_2)\in  \cP_{\bsa}\cap I_{\bsj,\bsm}=\cP_{\bsa} $ we have $1-z_2=1-\frac{b_1}{2}-\dots-\frac{b_n}{2^n}$ and
	  $$ 1-|2m_1+1-2z_1|=1-\left|1-t_n-\frac{t_{n-1}}{2}-\dots-\frac{t_1}{2^{n-1}}\right| $$ by~\eqref{z1}.
		We therefore find, after summation over $t_n$,
		\begin{align*}
		    \sum_{\bsz \in \cP_{\bsa}\cap I_{\bsj,\bsm}}&(1-|2m_1+1-2z_1|)(1-z_2)  \\
				=&\sum_{t_1,\dots,t_n=0}^1 \left(1-\left|1-t_n-\frac{t_{n-1}}{2}-\dots-\frac{t_1}{2^{n-1}}\right|\right)\left(1-\frac{b_1(t_n)}{2}-\dots-\frac{b_n(t_n)}{2^n}\right) \\
				=&\sum_{t_1,\dots,t_{n-1}=0}^1 \left(u(1-v(0))+(1-u)\left(1-v(1)-\frac{1}{2^n}\right)\right) \\
				=& \sum_{t_1,\dots,t_{n-1}=0}^1 \left(1-\frac{1}{2^n}-v(1)+\frac{1}{2^n}u+uv(1)-uv(0)\right) \\
				=& 2^{n-1}\left(1-\frac{1}{2^n}\right)+\left(\frac{1}{2^n}-1\right)(2^{n-2}-2^{-1})+\sum_{t_1,\dots,t_{n-1}=0}^1uv(1)-\sum_{t_1,\dots,t_{n-1}=0}^1uv(0).
		\end{align*}
		Here we use the short-hands $u=2^{-1}t_{n-1}+\dots+2^{-n+1}t_1$ and $v(t_n)=2^{-1}b_1(t_n)+\dots+2^{-n+1}b_{n-1}(t_n)$ and the fact
		that $\sum_{t_1,\dots,t_{n-1}=0}^1 u= \sum_{t_1,\dots,t_{n-1}=0}^1 v(1)=2^{n-2}-2^{-1}$. It is not difficult to observe
		that $\sum_{t_1,\dots,t_{n-1}=0}^1 uv(0)= \sum_{t_1,\dots,t_{n-1}=0}^1 uv(1)$; hence
		$$ \sum_{\bsz \in \cP_{\bsa}}(1-|2m_1+1-2z_1|)(1-z_2)=\frac14+2^{n-2}-\frac{1}{2^{n+1}}. $$
		The rest follows with~\eqref{art2}.
\end{proof}

For the following two propositions, we use the shorthand $R=r_1\oplus \dots \oplus r_{j_1}$.

\paragraph{Case 5: $\bsj\in\mathcal{J}_5:=\{(j_1,-1): 1\leq j_1 \leq n-2 \}$}

\begin{proposition}
  Let $\bsj\in \mathcal{J}_5$ and $\bsm\in \DD_{\bsj}$. Then we have
     $$ \mu_{\bsj,\bsm}=2^{-2n-2}-2^{-n-j_1-3}+2^{-2j_1-2}\varepsilon-2^{-2n-1}R-2^{-n-j_1-3}a_{n-j_1-1}(1-2R), $$
     where \begin{equation} \label{abcd} \varepsilon=\varepsilon(m_1)=\frac{r_1}{2^n}+\sum_{k=2}^{j_1}\frac{r_k \oplus a_{n+1-k}(r_{k-1}\oplus\dots\oplus r_1)}{2^{n+1-k}}.\end{equation}
		Hence, we have
\end{proposition}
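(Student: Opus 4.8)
The plan is to proceed exactly as in the proof of Proposition~\ref{prop5} (Case~4) and Proposition~\ref{prop2} (Case~2), adapted to the situation $\bsj=(j_1,-1)$ with $1\le j_1\le n-2$. By~\eqref{art2} we must evaluate the sum
$$ \Sigma:=\sum_{\bsz\in\cP_{\bsa}\cap I_{\bsj,\bsm}}(1-|2m_1+1-2^{j_1+1}z_1|)(1-z_2). $$
First I would use the membership condition~\eqref{cond}: since $j_2=-1$ the $z_2$-digits are unconstrained, while $\bsz\in I_{\bsj,\bsm}$ forces $t_{n+1-k}=r_k$ for $k=1,\dots,j_1$, i.e.\ the top $j_1$ digits $t_n,t_{n-1},\dots,t_{n-j_1+1}$ are fixed to $r_1,\dots,r_{j_1}$. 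The free digits are $t_1,\dots,t_{n-j_1}$. Using~\eqref{z1} one rewrites $1-|2m_1+1-2^{j_1+1}z_1| = 1-|1-t_{n-j_1}-2^{-1}t_{n-j_1-1}-\dots-2^{j_1-n+1}t_1|$, and splitting on the value of the leading free digit $t_{n-j_1}$ collapses the absolute value exactly as in the computations of Case~4, producing a factor $u$ or $1-u$ with $u=2^{-1}t_{n-j_1-1}+\dots+2^{-(n-j_1-1)}t_1$.

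The new feature relative to Case~4 is that $1-z_2=1-\sum_{k=1}^n b_k/2^k$ now depends on the \emph{fixed} top digits $r_1,\dots,r_{j_1}$ through $b_1,\dots,b_{n-j_1}$ (recall $b_k=t_k\oplus a_k(t_{k+1}\oplus\dots\oplus t_n)$). So I would separate $1-z_2$ into the part coming from $b_{n-j_1+1},\dots,b_n$ (which is determined by $r_1,\dots,r_{j_1}$ alone and contributes the $\varepsilon$ term of~\eqref{abcd} after the appropriate digit renaming $t_{n+1-k}=r_k$), the part $b_{n-j_1}/2^{n-j_1}$ whose digit $b_{n-j_1}=t_{n-j_1}\oplus a_{n-j_1}(r_1\oplus\dots\oplus r_{j_1}) = t_{n-j_1}\oplus a_{n-j_1}R$ toggles with the summation variable $t_{n-j_1}$ that we already split on, and the remaining low-order part $v(t_{n-j_1})=\sum_{k=1}^{n-j_1-1} b_k/2^k$. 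Then $\Sigma$ becomes, after summing over $t_{n-j_1}$,
$$ \Sigma=\sum_{t_1,\dots,t_{n-j_1-1}=0}^{1}\Big(u\,(1-\varepsilon-\tfrac{a_{n-j_1}R}{2^{n-j_1}}-v(0))+(1-u)\big(1-\varepsilon-\tfrac{1\oplus a_{n-j_1}R}{2^{n-j_1}}-v(1)\big)\Big), $$
which one expands and evaluates term by term using $\sum u=\sum v(1)=2^{n-j_1-2}-\tfrac12$ and the identity $\sum uv(0)=\sum uv(1)$ (the latter holds because replacing $t_{n-j_1}$ does not affect $\sum_{t_1,\dots,t_{n-j_1-1}}u\,v$; this is the same observation used at the end of Case~4). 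Plugging the resulting closed form for $\Sigma$ into~\eqref{art2}, multiplying by $-2^{-n-j_1-1}$ and adding $2^{-2j_1-3}$ gives the stated formula, with the $a_{n-j_1-1}(1-2R)$ term arising exactly as the analogous $a_{j_2+2}(1-2(s_w\oplus\dots\oplus s_{j_2}))$ term in Proposition~\ref{prop2}.

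The main obstacle is purely bookkeeping: carefully tracking how the fixed digits $r_1,\dots,r_{j_1}$ enter $\varepsilon$ via the renaming in~\eqref{mdyadic}--\eqref{cond}, and isolating which single low-order $b$-digit (namely $b_{n-j_1-1}$, governed by $a_{n-j_1-1}$ and by $R=r_1\oplus\dots\oplus r_{j_1}$) fails to average out cleanly in $\sum uv(t_n)$ — this is what produces the $a_{n-j_1-1}(1-2R)$ correction rather than a term that simply cancels. Once the closed form is in hand, the final bound $|\mu_{\bsj,\bsm}|\lesssim 2^{-n-j_1}$ is immediate since each summand is $O(2^{-n-j_1})$ (using $0\le\varepsilon< 2^{-n+j_1}$, $0\le R\le 1$, so $2^{-2j_1-2}\varepsilon\lesssim 2^{-n-j_1}$ and $2^{-2n-2}, 2^{-2n-1}R\le 2^{-n-j_1}$ as $j_1\le n-2$).
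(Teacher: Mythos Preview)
Your overall strategy matches the paper's: start from~\eqref{art2}, fix the top digits $t_{n+1-k}=r_k$, split on $t_{n-j_1}$ to remove the absolute value, isolate the constant tail $\varepsilon$, and reduce everything to sums of $u$, $v(0)$, $v(1)$ and their products over $t_1,\dots,t_{n-j_1-1}$. That part is fine.

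The genuine gap is your claim that $\sum uv(0)=\sum uv(1)$. This identity does \emph{not} carry over from Case~4 to Case~5. In Case~5 the low-order digits are
\[
b_k(t_{n-j_1})=t_k\oplus a_k\bigl(t_{k+1}\oplus\dots\oplus t_{n-j_1-1}\oplus t_{n-j_1}\oplus R\bigr),\qquad 1\le k\le n-j_1-1,
\]
so the fixed parity $R=r_1\oplus\dots\oplus r_{j_1}$ is now built into $v$, and flipping $t_{n-j_1}$ no longer has the symmetric effect you rely on. The paper computes $\sum uv(0)$ and $\sum uv(1)$ separately: the off-diagonal pieces and the diagonal pieces with $k\le n-j_1-2$ agree, but the diagonal term at $k=n-j_1-1$ gives $\tfrac14\bigl(1\oplus a_{n-j_1-1}(R\oplus 1)\bigr)$ versus $\tfrac14\bigl(1\oplus a_{n-j_1-1}R\bigr)$, so that
\[
\sum_{t_1,\dots,t_{n-j_1-1}} uv(1)-\sum_{t_1,\dots,t_{n-j_1-1}} uv(0)=\tfrac14\,a_{n-j_1-1}(2R-1).
\]
This nonzero difference is precisely the source of the $-2^{-n-j_1-3}a_{n-j_1-1}(1-2R)$ term in the stated formula. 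Under your assumption $\sum uv(0)=\sum uv(1)$ the only parameter of $\bsa$ left in your expansion is $a_{n-j_1}$ (through the $b_{n-j_1}$ digit), so no $a_{n-j_1-1}$ term can appear at all; your appeal to ``the analogous $a_{j_2+2}$ term in Proposition~\ref{prop2}'' does not locate it. To repair the argument you must actually evaluate $\sum uv(0)$ and $\sum uv(1)$ (or their difference) rather than asserting they coincide.
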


\begin{proof}
By~\eqref{art2}, we need to evaluate the sum
 $$\sum_{\bsz\in \cP_{\bsa}\cap I_{\bsj,\bsm}} (1-|2m_1+1-2^{j_1+1}z_1|)(1-z_2).$$
The condition $\bsz\in \cP_{\bsa}\cap I_{\bsj,\bsm}$ forces $t_n=r_1,\dots t_{n+1-j_1}=r_{j_1}$ and therefore
\begin{align*}
  1-z_2=& 1-\frac{b_1}{2}-\dots-\frac{b_n}{2^n}=1-v(t_{n-j_1})-\frac{t_{n-j_1}\oplus a_{n-j_1} R}{2^{n-j_1-1}}-\varepsilon,
\end{align*}
where 
\begin{align*} v(t_{n-j_1})&=\frac{b_1}{2}+\dots+\frac{b_{n-j_1-1}}{2^{n-j_1-1}} \\ &=\frac{t_1\oplus a_1(t_2\oplus\dots\oplus t_{n-j_1}\oplus R)}{2}+\dots+\frac{t_{n-j_1-1}\oplus a_{n-j_1-1}(t_{n-j_1}\oplus R)}{2^{n-j_1-1}} \end{align*}
and $\varepsilon$ as in~\eqref{abcd}.
Further, by~\eqref{z1} we write $2m_1+1-2^{j_1+1}z_1=1-t_{n-j_1}-u$, where $u=2^{-1}t_{n-j_1-1}+\dots+2^{j_1-n+1}t_1$. Then
\begin{align*}
    \sum_{\bsz\in \cP_{\bsa}\cap I_{\bsj,\bsm}} &(1-|2m_1+1-2^{j_1+1}z_1|)(1-z_2) \\
		  =& \sum_{t_1,\dots,t_{n-j_1}=0}^{1}\left(1-v(t_{n-j_1})-\frac{t_{n-j_1}\oplus a_{n-j_1} R}{2^{n-j_1-1}}-\varepsilon\right)(1-|1-t_{n-j_1}-u|) \\
			=&  \sum_{t_1,\dots,t_{n-j_1-1}=0}^{1}  \Bigg\{\left(1-v(0)-\frac{a_{n-j_1} R}{2^{n-j_1-1}}-\varepsilon\right)u\\&+\left(1-v(1)-\frac{1\oplus a_{n-j_1} R}{2^{n-j_1-1}}-\varepsilon\right)(1-u)\Bigg\}\\
			=& \sum_{t_1,\dots,t_{n-j_1-1}=0}^{1} \{ 1-2^{j_1-n}-\varepsilon+2^{j_1-n}a_{n-j_1}R+2^{j_1-n}u-v(1)\\ &-2^{1+j_1-n}a_{n-j_1}Ru+uv(1)-uv(0) \} \\
			=& 2^{n-j_1-1}( 1-2^{j_1-n}-\varepsilon+2^{j_1-n}a_{n-j_1}R) \\&+\left(2^{n-j_1-2}-2^{-1}\right)(2^{j_1-n}-1-2^{1+j_1-n}a_{n-j_1}R) 
			+\sum_{t_1,\dots,t_{n-j_1-1}=0}^{1}(uv(1)-uv(0)).
\end{align*}
We understand $b_1,\dots,b_{n-j_1-1}$ as functions of $t_{n-j_1}$ and have
\begin{align*}
  \sum_{t_1,\dots,t_{n-j_1-1}=0}^{1} uv(0)=& \sum_{t_1,\dots,t_{n-j_1-1}=0}^{1}\left(\frac{t_{n-j_1-1}}{2}+\dots+\frac{t_1}{2^{n-j_1-1}}\right)\left(\frac{b_1(0)}{2}+\dots+\frac{b_{n-j_1-1}(0)}{2^{n-j_1-1}}\right) \\
	 =& \sum_{t_1,\dots,t_{n-j_1-1}=0}^{1} \left(\sum_{k=1}^{n-j_1-1} \frac{t_kb_k(0)}{2^{n-j_1-k}2^k}+\sum_{\substack{k_1,k_2=0 \\ k_1 \neq k_2}}^{n-j_1-1} \frac{t_{k_1}b_{k_2}(0)}{2^{n-j_1-k_1}2^{k_2}}\right).
\end{align*}
The first sum simplifies to
\begin{align*}
  \sum_{k=1}^{n-j_1-1}& 2^{k-1}\sum_{t_k,\dots,t_{n-j_1-1}=0}^{1}\frac{t_kb_k(0)}{2^{n-j_1-k}2^k} \\
	   =& \frac{1}{2^{n-j_1}}\sum_{k=1}^{n-j_1-2}2^{k-1}\sum_{t_k,\dots,t_{n-j_1-1}=0}^{1}t_k(t_k\oplus a_k(t_{k+1}\oplus t_{n-j_1-1}\oplus R)) \\
		 &+\frac{1}{2^{n-j_1}}2^{n-j_1-2}\sum_{t_{n-j_1-1}=0}^{1}t_{n-j_1-1}(t_{n-j_1-1}\oplus a_{n-j_1-1}R) \\
		 =& \frac{1}{2^{n-j_1}}\sum_{k=1}^{n-j_1-2}2^{k-1}\sum_{t_{k+1},\dots,t_{n-j_1-1}=0}^{1}(1\oplus a_k(t_{k+1}\oplus t_{n-j_1-1}\oplus R)) \\
		 &+\frac{1}{4}(1 \oplus a_{n-j_1-1}(R\oplus 1)) \\
		=& \frac{1}{2^{n-j_1}}\sum_{k=1}^{n-j_1-2}2^{k-1}2^{n-j_1-k-2}(2-a_k)+\frac14 (1 \oplus a_{n-j_1-1}(R\oplus 1)) \\
		=& \frac{1}{8}\sum_{k=1}^{n-j_1-2}(2-a_k)+\frac14 (1 \oplus a_{n-j_1-1}(R\oplus 1)).
\end{align*}
Basically by the same arguments as in the proof of Proposition~\ref{prop1} we also find
\begin{align*}
  \sum_{t_1,\dots,t_{n-j_1-1}=0}^{1}\sum_{\substack{k_1,k_2=0 \\ k_1 \neq k_2}}^{n-j_1-1} \frac{t_{k_1}b_{k_2}}{2^{n-j_1-k_1}2^{k_2}}=\frac18 \sum_{\substack{k_1,k_2=0 \\ k_1 \neq k_2}}^1 2^{k_1-k_2}.
\end{align*}
Hence, we obtain
$$ \sum_{t_1,\dots,t_{n-j_1-1}=0}^{1} uv(0)=\frac{1}{8}\sum_{k=1}^{n-j_1-2}(2-a_k)+\frac14(1 \oplus a_{n-j_1-1}(R\oplus 1))+\frac18 \sum_{\substack{k_1,k_2=0 \\ k_1 \neq k_2}}^1 2^{k_1-k_2}. $$
We can evaluate $\sum_{t_1,\dots,t_{n-j_1-1}=0}^{1} uv(1)$ in almost the same way; the result is
$$ \sum_{t_1,\dots,t_{n-j_1-1}=0}^{1} uv(1)=\frac{1}{8}\sum_{k=1}^{n-j_1-2}(2-a_k)+\frac14 (1 \oplus a_{n-j_1-1}R)+\frac18 \sum_{\substack{k_1,k_2=0 \\ k_1 \neq k_2}}^1 2^{k_1-k_2}. $$
Hence the difference of these two expressions is given by
$$ \sum_{t_1,\dots,t_{n-j_1-1}=0}^{1} uv(1)-\sum_{t_1,\dots,t_{n-j_1-1}=0}^{1} uv(0)=\frac14 a_{n-j_1-1}(2R-1). $$
Now we put everything together and use~\eqref{art2} to find the claimed result on the Haar coefficients.
\end{proof}

\paragraph{Case 6: $\bsj\in\mathcal{J}_6:=\{(j_1,j_2): j_1+j_2 \leq n-3 \}$}

\begin{proposition}
Let $\bsj\in \mathcal{J}_6$ and $\bsm\in \DD_{\bsj}$. If $\mathcal{H}_{j_2}=\{1,\dots,j_2\}$ or if $j_2=0$, then we have
 $$ \mu_{\bsj,\bsm}=2^{-2n-2}(1-2a_{n-j_1}R)(1-a_{j_2+1}). $$
Otherwise, let $w\in\{1,\dots,j_2\}$ be the greatest index with $a_w=1$. If $a_{j_2+1}=0$, then
$$ \mu_{\bsj,\bsm}=2^{-2n-2}(1-2a_{n-j_1}R). $$
If $a_{j_2+1}=1$, then
$$ \mu_{\bsj,\bsm}=-2^{-2n-j_2+w-3}(1-2a_{n-j_1}R)(1-2(s_w\oplus\dots\oplus s_{j_2})). $$
Note that for $j_1=0$ we set $a_{n-j_1}R=0$ in all these formulas. Hence, in any case we have $|\mu_{\bsj,\bsm}|\lr 2^{-2n}$
\end{proposition}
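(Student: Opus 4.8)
The plan is to follow the same template as in Case~5 (Proposition for $\mathcal{J}_5$), now with two genuine Haar indices $j_1,j_2\ge 0$. By~\eqref{art4} the task reduces to evaluating the sum
$$\Sigma:=\sum_{\bsz\in \cP_{\bsa}\cap I_{\bsj,\bsm}} (1-|2m_1+1-2^{j_1+1}z_1|)(1-|2m_2+1-2^{j_2+1}z_2|).$$
By~\eqref{cond}, membership $\bsz\in\cP_{\bsa}\cap I_{\bsj,\bsm}$ pins down $t_n=r_1,\dots,t_{n+1-j_1}=r_{j_1}$ from the $z_1$-side and $b_1=s_1,\dots,b_{j_2}=s_{j_2}$ from the $z_2$-side. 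Since $\bsj\in\mathcal{J}_6$ means $j_1+j_2\le n-3$, these two constraint blocks do not overlap and there remain $n-j_1-j_2$ free bits $t_1,\dots,t_{n-j_1}$ (of which the first $j_2$ become determined, in a linear way, by the constraints $b_k=s_k$ exactly as in Proposition~\ref{prop2}). First I would use~\eqref{z1} to write $2m_1+1-2^{j_1+1}z_1 = 1 - t_{n-j_1} - u$ with $u=2^{-1}t_{n-j_1-1}+\dots+2^{j_1-n+1}t_1$, and~\eqref{z2} together with the recursion $b_k=t_k\oplus a_k(t_{k+1}\oplus\dots\oplus t_n)$ to express $1-|2m_2+1-2^{j_2+1}z_2|$; the key structural point is that $t_{n-j_1}$ appears in $b_{j_2+1},\dots,b_{n-j_1-1}$ only through the tail sum $t_{n-j_1}\oplus R$ (where $R=r_1\oplus\dots\oplus r_{j_1}$), precisely as in Case~5.

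Next I would split the sum over $t_{n-j_1}\in\{0,1\}$, mirroring the computation in Proposition~\ref{prop2}: write $1-z_1$-type and $1-z_2$-type factors, isolate the linear pieces in $t_{n-j_1}$, and sum. The outcome after this first summation is a single expression over the remaining free bits $t_1,\dots,t_{n-j_1-1}$ that involves $R$, the parameter $a_{n-j_1}$, and $-$ on the $z_2$-side $-$ the structure of $\mathcal H_{j_2}$. Here the three cases of the statement emerge naturally: if $\mathcal H_{j_2}=\{1,\dots,j_2\}$ (all $a_1=\dots=a_{j_2}=0$) or $j_2=0$, then each $t_k=s_k$ for $k\le j_2$ is simply fixed, no "folding" of bits into a highest index $w$ occurs, and one obtains the clean factor $(1-a_{j_2+1})$; whereas if some $a_w=1$ with $w\le j_2$ the relation $t_w=s_w\oplus\dots\oplus s_{j_2}\oplus t_{j_2+1}\oplus\dots\oplus t_n$ reintroduces a genuine dependence, producing the $2^{w}$-scaled term and the factor $1-2(s_w\oplus\dots\oplus s_{j_2})$ when $a_{j_2+1}=1$, and collapsing to $2^{-2n-2}(1-2a_{n-j_1}R)$ when $a_{j_2+1}=0$.

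After the $t_{n-j_1}$-summation, the remaining sum over $t_1,\dots,t_{n-j_1-1}$ is of the same type as the quantities $\sum uv(0)$, $\sum uv(1)$ handled in Case~5 and in Proposition~\ref{prop1}: products of a "digit sum" $u$ with a value $v$ built from the $b_k$'s, which evaluate via the elementary identities $\sum_{t}t_k b_k = $ (something like $2^{\cdot}(2-a_k)$) for the diagonal terms and $\sum_t t_{k_1}b_{k_2}=2^{\cdot}$ for the off-diagonal terms, yielding a geometric-type sum $\frac18\sum_{k_1\ne k_2}2^{k_1-k_2}$ whose $R$-dependence cancels in the required differences. Assembling everything and multiplying by the prefactor $2^{-n-j_1-j_2-2}$ from~\eqref{art4} and subtracting $2^{-2j_1-2j_2-4}$ gives the three displayed formulas; the final bound $|\mu_{\bsj,\bsm}|\lr 2^{-2n}$ is then immediate since $j_2-w\ge 0$ forces $2^{-2n-j_2+w-3}\le 2^{-2n-3}$, and $|1-2a_{n-j_1}R|,|1-2(s_w\oplus\dots\oplus s_{j_2})|\le 1$. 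The main obstacle I anticipate is purely bookkeeping: correctly tracking which of $t_1,\dots,t_{n-j_1-1}$ are free versus fixed by the $b_k=s_k$ constraints, and keeping the parity variable $R$ (from the $z_1$-block) cleanly separated from the parities $s_w\oplus\dots\oplus s_{j_2}$ (from the $z_2$-block) so that the cross terms cancel as they must; there is no conceptual difficulty beyond what Cases~2 and~5 already required, only the need to do the two one-dimensional foldings simultaneously.
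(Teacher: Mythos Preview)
Your plan is correct and matches the paper's approach: the paper likewise starts from~\eqref{art4}, uses the constraints~\eqref{cond} together with the $z_2$-side resolution from Proposition~\ref{prop2} (the index $w$ and the identity $t_w=s_w\oplus\dots\oplus s_{j_2}\oplus t_{j_2+1}\oplus\dots\oplus t_n$), and then reduces to sums of the $uv$-type already handled in Cases~2 and~5. One small clarification: the paper carries out the two ``one-dimensional foldings'' you mention at the end in a single step, summing over both $t_{j_2+1}$ and $t_{n-j_1}$ at once to produce four terms before evaluating the residual sum over $t_{j_2+2},\dots,t_{n-j_1-1}$; your middle paragraph reads as if only the $t_{n-j_1}$-split is performed first, which would leave the second absolute value unresolved, so when you execute the details make sure both splittings happen together as you correctly note in your final sentence.
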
 

\begin{proof}
  The proof is similar in all cases; hence we only treat the most complicated case where $j_2\geq 1$ and $\cH_{j_2} \neq \{1,\dots,j_2\}$.
	By~\eqref{art4}, we need to study the sum
	$$  \sum_{\bsz\in \cP_{\bsa}\cap I_{\bsj,\bsm}} (1-|2m_1+1-2^{j_1+1}z_1|)(1-|2m_2+1-2^{j_2+1}z_2|), $$
	where the condition $\bsz\in \cP_{\bsa}\cap I_{\bsj,\bsm}$ forces $t_{n+1-k}=r_k$ for all $k\in\{1,\dots,j_1\}$
	as well as $b_k=s_k$ for all $k\in\{1,\dots,j_2\}$. 	
	We have already seen in the proof of Proposition 2 that the latter equalities allow us to
	express the digits $t_k$ by the digits $s_1,\dots,s_{j_2}$ of $m_2$ for all $k\in\{1,\dots,j_2\}\setminus\{w\}$.
	We also have $t_w=s_w\oplus\dots\oplus s_{j_2}\oplus t_{j_2+1}\oplus\dots\oplus t_n$. With~\eqref{z1}, these observations lead to
	$$ 2m_1+1-2^{j_1+1}z_1=1-t_{n-j_1}-u-2^{j_1+j_2-n+1}t_{j_2+1}-2^{j_1+w-1}t_w-\varepsilon_2(m_2), $$
	where $u=2^{-1}t_{n-j_1-1}+\dots+2^{j_1+j_2-n+2}t_{j_2+2}$ and 
	$ \varepsilon_2 $ is determined by $m_2$.
	Further, we write with~\eqref{z2}
	$$ 2m_2+1-2^{j_2+1}z_2=1-b_{j_2+1}-v-2^{j_1+j_2-n+1}b_{n-j_1}-\varepsilon_1(m_1), $$
	where $v=v(t_{n-j_1})=2^{-1}b_{j_1+2}+\dots+2^{j_1+j_2-n+2}b_{n-j_1-1}$ and $\varepsilon_1$ is obviously determined by $m_1$. Hence, we have
	\begin{align*}
	   \sum_{\bsz\in \cP_{\bsa}\cap I_{\bsj,\bsm}} &(1-|2m_1+1-2^{j_1+1}z_1|)(1-|2m_2+1-2^{j_2+1}z_2|) \\
		=& \sum_{t_{j_2+1},\dots,t_{n-j_1}=0}^1 \left(1-|1-t_{n-j_1}-u-2^{j_1+j_2-n+1}t_{j_2+1}-2^{j_1+w-1}t_w-\varepsilon_2(m_2)|\right) \\
		&\times \bigg(1-|1-t_{j_2+1}\oplus a_{j_2+1}(t_{j_2+2}\oplus \dots \oplus t_{n-j_1}\oplus R)-v(t_{n-j_1})\\ &-2^{j_1+j_2-n+1}(t_{n-j_1}\oplus a_{n-j_1}R)-\varepsilon_1)|\bigg).
	\end{align*}
	Recall we may write $t_w=s_w\oplus \dots \oplus s_{j_2} \oplus t_{j_2+1}\oplus t_{j_2+2}\oplus\dots\oplus t_{n-j_1-1} \oplus t_{n-j_1}\oplus R$. We stress the
	dependence of $t_w$ on $t_{j_2+1}\oplus t_{n-j_1}$ by writing $t_w(t_{j_2+1}\oplus t_{n-j_1})$. If $a_{j_2+1}=0$, then we
	 obtain after summation over $t_{j_2+1}$ and $t_{n-j_1}$
	\begin{align*}
	   \sum_{\bsz\in \cP_{\bsa}\cap I_{\bsj,\bsm}} &(1-|2m_1+1-2^{j_1+1}z_1|)(1-|2m_2+1-2^{j_2+1}z_2|) \\
		  =& \sum_{t_{j_2+2},\dots,t_{n-j_1-1}=0}^{1} \bigg\{
			    (u+2^{j_1+w-1}t_w(0)+\varepsilon_2)(v(0)+2^{j_1+j_2-n+1}a_{n-j_1}R+\varepsilon_1)\\
					&+  (u+2^{j_1+j_2-n+1}+2^{j_1+w-1}t_w(1)+\varepsilon_2)(1-v(0)-2^{j_1+j_2-n+1}a_{n-j_1}R-\varepsilon_1) \\
					&+ (1-u-2^{j_1+w-1}t_w(1)-\varepsilon_2)(v(0)+2^{j_1+j_2-n+1}(a_{n-j_1}R\oplus 1)+\varepsilon_1)\\
					&+ (1-u-2^{j_1+j_2-n+1}-2^{j_1+w-1}t_w(0)-\varepsilon_2)(1-v(1) \\ &-2^{j_1+j_2-n+1}(a_{n-j_1}R\oplus 1)-\varepsilon_1)
			\bigg\} \\
			=& \sum_{t_{j_2+2},\dots,t_{n-j_1-1}=0}^{1} \bigg\{1+2^{2(n+j_1+j_2+1)}+2^{j_1+w-1}-2^{2j_1+j_2-n+w}-2^{2j_1+2j_2-2n+3}a_{n-j_1}R \\
			&+(2^{1+2j_1+j_2-n+w}-2^{j_1+w})t_w(0) +2^{j_1+w}(2t_w(0)-1)+2^{n+j_1+j_2+1}(v(1)-v(0)) \\
			&-2^{w+j_1-1}(v(1)+v(0))+2^{j_1+w}(t_w(0)v(0)+t_w(0)v(1)).\bigg\}
	\end{align*}
	We regarded $t_w(1)=1-t_w(0)$. By standard argumentation, we find 
	$$ \sum_{t_{j_2+2},\dots,t_{n-j_1-1}=0}^{1}v(0)=\sum_{t_{j_2+2},\dots,t_{n-j_1-1}=0}^{1}v(1)=2^{n-j_1-j_2-3}-\frac12 $$
	and
	$$ \sum_{t_{j_2+2},\dots,t_{n-j_1-1}=0}^{1} t_w(0)=\sum_{t_{j_2+2},\dots,t_{n-j_1-2}=0}^{1}1=2^{n-j_1-j_2-3}. $$
	We use the short-hand $T=t_{j_2+3}\oplus\dots\oplus t_{n-j_1-1}$, which allows us to write
	\begin{align*}
	   \sum_{t_{j_2+2},\dots,t_{n-j_1-1}=0}^{1} &t_w(0)v(0)=\sum_{t_{j_2+2},\dots,t_{n-j_1-1}=0}^{1} (s_w\oplus\dots \oplus s_{j_2}\oplus t_{j_2+2} \oplus \dots\oplus t_{n-j_1-1}\oplus R) \\
		  &\times \bigg( \frac{t_{j_2+2}\oplus a_{j_2+2}(t_{j_2+3}\oplus\dots\oplus t_{n-j_1-1}\oplus R)}{2} \\
			&+\frac{t_{j_2+3}\oplus a_{j_2+3}(t_{j_2+4}\oplus\dots\oplus t_{n-j_1-1}\oplus R)}{4}+\dots+\frac{t_{n-j_1-1}\oplus a_{n-j_1-1}R}{2^{n-j_1-j_2-2}}\bigg) \\
			=& \sum_{t_{j_2+3},\dots,t_{n-j_1-1}=0}^{1}\frac12 (s_w\oplus\dots \oplus s_{j_2}\oplus T \oplus R \oplus 1 \oplus a_{j_2+2}(T\oplus R)) \\&+\sum_{l=0}^{2^{n-j_1-j_2-3}-1}\frac{l}{2^{n-j_1-j_2-2}} \\
			=& \sum_{t_{j_2+3},\dots,t_{n-j_1-1}=0}^{1}\frac12 (s_w\oplus\dots \oplus s_{j_2}\oplus 1 \oplus (1-a_{j_2+2})(T\oplus R))\\ &+2^{n-j_1-j_2-5}-\frac14.
	\end{align*}
	Similarly, we can show
	\begin{align*}
	   \sum_{t_{j_2+2},\dots,t_{n-j_1-1}=0}^{1} t_w(0)v(1)=&\sum_{t_{j_2+3},\dots,t_{n-j_1-1}=0}^{1}\frac12 (s_w\oplus\dots \oplus s_{j_2} \oplus (1-a_{j_2+2})(T\oplus R \oplus 1))
		\\ &+2^{n-j_1-j_2-5}-\frac14
	\end{align*}
	and therefore
	$$  \sum_{t_{j_2+2},\dots,t_{n-j_1-1}=0}^{1} t_w(0)(v(0)+v(1))=2^{n-j_1-j_2-4}+2^{n-j_1-j_2-5}-\frac14, $$
	a fact which can be found by distinguishing the cases $a_{j_2+1}=0$ and $a_{j_2+1}=1$.
	We put everything together and obtain 
	\begin{align*} \sum_{\bsz\in \cP_{\bsa}\cap I_{\bsj,\bsm}} &(1-|2m_1+1-2^{j_1+1}z_1|)(1-|2m_2+1-2^{j_2+1}z_2|) \\ &=2^{j_1+j_2-n}+2^{n-j_1-j_2-2}-2^{-n+j_1+j_2+1}a_{n-j_1}R, \end{align*}
	which leads to the claimed result for $a_{j_2+1}=0$ via~\eqref{art4}.\\
	Now assume that $a_{j_2+1}=1$. In this case, it is more convenient to consider $t_w$ as a function
	of $t_{j_2+1}\oplus\dots\oplus t_{n-j_1}\oplus R$. We obtain  after summation over $t_{j_2+1}$ and $t_{n-j_1}$
	\begin{align*}
	   \sum_{\bsz\in \cP_{\bsa}\cap I_{\bsj,\bsm}} &(1-|2m_1+1-2^{j_1+1}z_1|)(1-|2m_2+1-2^{j_2+1}z_2|) \\
		  =& \sum_{t_{j_2+2},\dots,t_{n-j_1-1}=0}^{1} \bigg\{
			    (u+2^{j_1+j_2-n+1}(T\oplus R)+2^{j_1+w-1}t_w(0)+\varepsilon_2)\\&\times(v(0) +2^{j_1+j_2-n+1}a_{n-j_1}R+\varepsilon_1)\\
					&+  (u+2^{j_1+j_2-n+1}(T\oplus R\oplus 1)+2^{j_1+w-1}t_w(1)+\varepsilon_2)\\&\times(1-v(0)-2^{j_1+j_2-n+1}a_{n-j_1}R-\varepsilon_1) \\
					&+ (1-u-2^{j_1+j_2-n+1}(T\oplus R\oplus 1)-2^{j_1+w-1}t_w(0)-\varepsilon_2)\\&\times(v(1)+2^{j_1+j_2-n+1}(a_{n-j_1}R\oplus 1)+\varepsilon_1)\\
					&+ (1-u-2^{j_1+j_2-n+1}(T\oplus R)-2^{j_1+w-1}t_w(1)-\varepsilon_2)\\&\times(1-v(1)-2^{j_1+j_2-n+1}(a_{n-j_1}R\oplus 1)-\varepsilon_1)
			\bigg\} \\
			=& \sum_{t_{j_2+2},\dots,t_{n-j_1-1}=0}^{1} 2^{-2n}\bigg\{2^{n+j_1+j_2+1}+2^{2j_1+j_2+w+1}(1-2t_w(0)+2a_{n-j_1}R(2t_w(0)-1)) \\
			  &-2^{2(j_1+j_2+1)}+2^{2n}+(2^{2j_1+2j_2+3}-2^{n+j_1+j_2+2})(T\oplus R) \\ &+2^{n+j_1+j_2+2}\varepsilon_1(2(T\oplus R)-1)\\
				&-2^{n+j_1+j_2+1}(v(0)+v(1))+2^{n+j_1+j_2+1}(2t_w(0)-1)(v(1)-v(0))\\&+2^{n+j_1+w}(v(0)+v(1)) +2^{n+j_1+j_2+2}(T\oplus R)(v(1)+v(0)).\bigg\}
	\end{align*}
	Again, we used $t_w(1)=1-t_w(0)$. Note that $t_w(0)=s_w\oplus\dots\oplus s_{j_2}$ is independent of the
	digits $t_{j_2+2},\dots,t_{n-j_1-1}$. We have 
	$$ \sum_{t_{j_2+2},\dots,t_{n-j_1-1}=0}^{1}T=\sum_{t_{j_2+2},\dots,t_{n-j_1-2}=0}^{1}1=2^{n-j_1-j_2-3}  $$
	and we know the sums $\sum_{t_{j_2+2},\dots,t_{n-j_1-1}=0}^{1}v(0)$ and $\sum_{t_{j_2+2},\dots,t_{n-j_1-1}=0}^{1}v(1)$ from above. Similarly as above we can show
	\begin{align*}
	   \sum_{t_{j_2+2},\dots,t_{n-j_1-1}=0}^{1}&(T\oplus R)v(0) \\
		   =& \frac12 \sum_{t_{j_2+3},\dots,t_{n-j_1-1}=0}^{1} (1\oplus (1-a_{j_2+2})(t_{j_2+3}\oplus \dots\oplus t_{n-j_1-1}\oplus R)) \\&+\sum_{l=0}^{2^{n-j_1-j_2-3}-1} \frac{l}{2^{n-j_1-j_2-2}}
	\end{align*}
	as well as
	\begin{align*}
	   \sum_{t_{j_2+2},\dots,t_{n-j_1-1}=0}^{1}&(T\oplus R)v(1) \\
		   =& \frac12 \sum_{t_{j_2+3},\dots,t_{n-j_1-1}=0}^{1} (1-a_{j_2+2})(t_{j_2+3}\oplus \dots\oplus t_{n-j_1-1}\oplus R\oplus 1)\\&+\sum_{l=0}^{2^{n-j_1-j_2-3}-1} \frac{l}{2^{n-j_1-j_2-2}},
	\end{align*}
	which yields
	$$ \sum_{t_{j_2+2},\dots,t_{n-j_1-1}=0}^{1}(T\oplus R)(v(1)+v(0))=2^{n-j_1-j_2-4}+2\sum_{l=0}^{2^{n-j_1-j_2-3}-1} \frac{l}{2^{n-j_1-j_2-2}}. $$
	Now we can combine our results with~\eqref{art4} to obtain the claimed result.
	\end{proof}

		\paragraph{Case 7: $\bsj\in\mathcal{J}_7:=\{(j_1,j_2): 0\leq j_1,j_2 \leq n-1 \textit{\, and \,} j_1+j_2\geq n-2\}$}
		
	\begin{proposition} Let $\bsj\in \mathcal{J}_7$ and $\bsm\in \DD_{\bsj}$.
	   Then we have $|\mu_{\bsj,\bsm}|\lesssim 2^{-n-j_1-j_2}$ for all $\bsm\in\DD_{\bsj}$ and $|\mu_{\bsj,\bsm}|=2^{-2j_1-2j_2-4}$ for all
		but at most $2^n$ elements $\bsm\in\DD_{\bsj}$.
	\end{proposition}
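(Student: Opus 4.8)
The plan is to read the estimate straight off the representation~\eqref{art4} of $\mu_{\bsj,\bsm}$, using only that $\cP_{\bsa}$ is a digital $(0,n,2)$-net in base $2$ and that each of the two factors appearing in~\eqref{art4} takes values in $[0,1]$ on $I_{\bsj,\bsm}$. First I would record that $\cP_{\bsa}$ really is a $(0,n,2)$-net: by the criterion recalled in Section~1 it suffices to check that for every $d_1+d_2=n$ the first $d_1$ rows of $C_1$ together with the first $d_2$ rows of $C_2$ are linearly independent over $\ZZ_2$. The first $d_1$ rows of $C_1$ are the standard basis vectors $\bfe_n,\bfe_{n-1},\dots,\bfe_{n-d_1+1}$, while the first $d_2=n-d_1$ rows of $C_2$ have their leading $1$ in the columns $1,2,\dots,d_2$; arranging these $n$ rows with the $C_2$-rows on top produces a matrix of block shape $\left(\begin{smallmatrix}U & \ast\\ 0 & V\end{smallmatrix}\right)$ with $U$ unit upper triangular of size $d_2$ and $V$ a permutation matrix of size $d_1$, hence of full rank. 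Consequently every dyadic box $I_{\bsj,\bsm}$ of volume $2^{-(j_1+j_2)}$ contains exactly $2^{\,n-j_1-j_2}$ points of $\cP_{\bsa}$ when $j_1+j_2\le n$, and at most one point when $j_1+j_2\ge n$.

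For the uniform bound, note that $\bsj\in\mathcal{J}_7$ forces $j_1+j_2\ge n-2$, so the count above gives $|\cP_{\bsa}\cap I_{\bsj,\bsm}|\le 4$ for every $\bsm\in\DD_{\bsj}$ (namely $4,2,1$ according to $j_1+j_2=n-2,\,n-1,\,\ge n$). Since $0\le 1-|2m_i+1-2^{j_i+1}z_i|\le 1$ for $\bsz=(z_1,z_2)\in I_{\bsj,\bsm}$ and $i=1,2$, formula~\eqref{art4} yields
$$|\mu_{\bsj,\bsm}|\le 2^{-n-j_1-j_2-2}\,|\cP_{\bsa}\cap I_{\bsj,\bsm}|+2^{-2j_1-2j_2-4}\le 2^{-n-j_1-j_2}+2^{-2j_1-2j_2-4},$$
and using $j_1+j_2\ge n-2$ once more gives $2^{-2j_1-2j_2-4}=2^{-(j_1+j_2)-4}\cdot 2^{-(j_1+j_2)}\le 2^{-n-j_1-j_2-2}$, whence $|\mu_{\bsj,\bsm}|\lr 2^{-n-j_1-j_2}$, which is the first assertion.

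For the statement on the exceptional coefficients, if $j_1+j_2\le n$ then $|\DD_{\bsj}|=2^{j_1+j_2}\le 2^n$ and there is nothing to prove. So assume $j_1+j_2>n$. As noted right after~\eqref{art4}, a point $\bsz$ lying on the boundary of $I_{\bsj,\bsm}$ contributes $0$ to the sum, so that sum effectively runs over $\cP_{\bsa}\cap\mathring I_{\bsj,\bsm}$; whenever this set is empty the sum vanishes and $\mu_{\bsj,\bsm}=-2^{-2j_1-2j_2-4}$, i.e.\ $|\mu_{\bsj,\bsm}|=2^{-2j_1-2j_2-4}$. Since the boxes $\{I_{\bsj,\bsm}:\bsm\in\DD_{\bsj}\}$ partition $[0,1)^2$, each of the $2^n$ points of $\cP_{\bsa}$ lies in the interior of at most one of them, so $\cP_{\bsa}\cap\mathring I_{\bsj,\bsm}\neq\emptyset$ for at most $2^n$ values of $\bsm$; for all remaining $\bsm$ we have $|\mu_{\bsj,\bsm}|=2^{-2j_1-2j_2-4}$, as claimed. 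I do not expect a genuine obstacle here — in contrast to the earlier cases no oscillatory sum has to be evaluated — the only point requiring a little care is to discard points sitting on the boundary of a dyadic box when counting contributions, which is precisely the observation recorded after~\eqref{art4}.
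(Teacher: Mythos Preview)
Your proof is correct and follows essentially the same route as the paper's own argument: both bound the sum in~\eqref{art4} by using that each box $I_{\bsj,\bsm}$ with $j_1+j_2\ge n-2$ contains at most four points, and both obtain the second assertion from the observation that at most $2^n$ boxes can contain a point of $\cP_{\bsa}$. Your write-up is a bit more detailed (you verify the $(0,n,2)$-net property explicitly and split off the trivial subcase $j_1+j_2\le n$), but there is no substantive difference in strategy.
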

  
	\begin{proof}
	  At most $2^n$ of the $2^{|\bsj|}$ dyadic boxes $I_{\bsj,\bsm}$ for $\bsm\in\DD_{\bsj}$ contain points. For the empty boxes, only the linear part of the discrepancy function contributes
		to the corresponding Haar coefficients; hence $|\mu_{\bsj,\bsm}|=2^{-2j_1-2j_2-4}$ for all
		but at most $2^n$ elements $\bsm\in\DD_{\bsj}$. The non-empty boxes contain at most 4 points. Hence we find by~\eqref{art4}
		\begin{align*}
		  |\mu_{\bsj,\bsm}|\leq& 2^{-n-j_1-j_2-2}\sum_{\bsz \in \cP\cap I_{\bsj,\bsm}}|(1-|2m_1+1-2^{j_1+1}z_1|)(1-|2m_2+1-2^{j_2+1}z_2|)| \\&+2^{-2j_1-2j_2-4} \\
			 \leq& 2^{-n-j_1-j_2-2}4 +2^{-2j_1-2j_2-4}\leq 2^{-n-j_1-j_2}+2^{-j_1-j_2-(n-2)-4}\lesssim 2^{-n-j_1-j_2}.
		\end{align*}
	\end{proof}

		\paragraph{Case 8: $\bsj\in\mathcal{J}_8:=\{(n-1,-1),(-1,n-1)\}$}
		
	\begin{proposition} Let $\bsj\in \mathcal{J}_8$ and $\bsm\in \DD_{\bsj}$.
	  Let $\bsj=(n-1,-1)$ or $\bsj=(-1,n-1)$. Then $\mu_{\bsj,\bsm}\lesssim 2^{-2n}$.
	\end{proposition}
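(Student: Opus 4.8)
The plan is to handle the two elements of $\mathcal{J}_8$ separately via the explicit formulas \eqref{art2} and \eqref{art3}, exploiting the fact that in each case the box $I_{\bsj,\bsm}$ has length $2^{-(n-1)}$ in the coordinate carrying level $n-1$ and hence can meet the $2^n$-point set $\cP_{\bsa}$ in at most two points; this alone gives the bound.

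First I would take $\bsj=(n-1,-1)$. Here \eqref{art2} gives $2^{-n-j_1-1}=2^{-2n}$, $2^{j_1+1}=2^n$ and $2^{-2j_1-3}=2^{-2n-1}$, so it suffices to control $\sum_{\bsz\in\cP_{\bsa}\cap I_{\bsj,\bsm}}(1-|2m_1+1-2^{n}z_1|)(1-z_2)$. Since the first coordinates of $\cP_{\bsa}$ are exactly $\{r/2^n:0\le r<2^n\}$, the interval $I_{n-1,m_1}$ contains precisely the two points with $r\in\{2m_1,2m_1+1\}$: the point with $r=2m_1$ sits at the left endpoint $m_1/2^{n-1}$, where $1-|2m_1+1-2^nz_1|=0$, while for $r=2m_1+1$ one has $2m_1+1-2^nz_1=0$, so that factor equals $1$. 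Thus the sum equals $1-z_2$ for the unique point of $\cP_{\bsa}$ in the interior of $I_{\bsj,\bsm}$, which lies in $[0,1)$, and $|\mu_{\bsj,\bsm}|\le 2^{-2n}+2^{-2n-1}\lesssim 2^{-2n}$. Even the crude bound ``two points, each contributing a term of modulus $\le 1$'' would already suffice.

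For $\bsj=(-1,n-1)$ I would run the same argument through \eqref{art3}, with the two coordinates interchanged. The one extra ingredient is that the second coordinates of $\cP_{\bsa}$ must also be equispaced, i.e.\ equal to $\{r/2^n:0\le r<2^n\}$; this holds because $C_2$ is a NUT matrix, hence invertible, so $(t_1,\dots,t_n)\mapsto(b_1,\dots,b_n)$ is a bijection of $\ZZ_2^n$. Granting this, $I_{n-1,m_2}$ contains exactly two points of $\cP_{\bsa}$, one of them at the endpoint $m_2/2^{n-1}$ contributing $0$ to the sum in \eqref{art3}, and the conclusion $|\mu_{\bsj,\bsm}|\le 2^{-2n}+2^{-2n-1}\lesssim 2^{-2n}$ follows exactly as before.

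No step here is really hard: the proposition is just the boundary case of the ranges treated in Cases~2 and~5, and the entire content is the elementary counting fact that a dyadic interval of length $2^{-(n-1)}$ meets a $2^n$-point equispaced set in exactly two points. The only spot needing marginally more care than the $\bsj=(n-1,-1)$ case is the observation, used for $\bsj=(-1,n-1)$, that the second coordinates of $\cP_{\bsa}$ are themselves equispaced, which is immediate from the invertibility of $C_2$.
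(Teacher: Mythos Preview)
Your argument is correct and follows essentially the same approach as the paper: bound the number of points of $\cP_{\bsa}$ in $I_{\bsj,\bsm}$ by two, use that each summand in \eqref{art2} (respectively \eqref{art3}) has modulus at most $1$, and apply the triangle inequality. Your version is marginally sharper since you observe that one of the two points sits on the boundary and contributes zero, and you make explicit the reason (invertibility of $C_2$) why the second coordinates are equispaced, which the paper uses implicitly.
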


	\begin{proof}
	   At most 2 points lie in $I_{\bsj,\bsm}$. Hence, if $\bsj=(n-1,-1)$, then by~\eqref{art2} we have
		\begin{align*} |\mu_{\bsj,\bsm}|\leq& 2^{-n-j_1-1}\sum_{\bsz \in \cP\cap I_{\bsj,\bsm}}|(1-|2m_1+1-2^{j_1+1}z_1|)(1-z_2)|+2^{-2j_1-3} \\
		                     =& 2^{-n-j_1-1}2+2^{-2j_1-3}=2^{-2n+1}+2^{-2n-1}\lesssim 2^{-2n}.
		   \end{align*}
			The case $\bsj=(-1,n-1)$ can be shown the same way.
	\end{proof}
	
		\paragraph{Case 9: $\bsj\in\mathcal{J}_{9}:=\{(j_1,j_2): j_1\geq n \textit{\, or \,} j_2\geq n\}$}
	
	\begin{proposition} Let $\bsj\in \mathcal{J}_{9}$ and $\bsm\in \DD_{\bsj}$.
	 Then $\mu_{\bsj,\bsm}=-2^{-2j_1-2j_2-4}$.
	\end{proposition}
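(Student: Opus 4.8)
The plan is to mimic the argument already used for Case~3 ($\bsj \in \mathcal{J}_3$), now invoking the two-dimensional formula~\eqref{art4} instead of~\eqref{art2} and~\eqref{art3}. The key observation is that when $j_1 \ge n$ or $j_2 \ge n$ the dyadic box $I_{\bsj,\bsm}$ is too thin in at least one coordinate direction to contain any point of $\cP_{\bsa}$ in its interior, so the counting part of the discrepancy function contributes nothing to $\mu_{\bsj,\bsm}$.

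First I would record the shape of the coordinates of the points of $\cP_{\bsa}$. By the representation~\eqref{darstPa}, the first coordinate of every $\bsz \in \cP_{\bsa}$ equals $(2^{n-1}t_n + \dots + t_1)/2^n$ and hence lies in $\frac{1}{2^n}\ZZ$. Moreover, since $(b_1,\dots,b_n)^\top = C_2 (t_1,\dots,t_n)^\top$ and $C_2$ is a NUT matrix, hence invertible over $\ZZ_2$, the second coordinate $(2^{n-1}b_1 + \dots + b_n)/2^n$ likewise lies in $\frac{1}{2^n}\ZZ$. (In fact both coordinate projections of $\cP_{\bsa}$ equal $\{0,1/2^n,\dots,(2^n-1)/2^n\}$, but only the weaker divisibility statement is needed.)

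Next, assume $j_1 \ge n$; the case $j_2 \ge n$ is symmetric in view of the previous paragraph. Any number $k/2^n$ with $k \in \ZZ$ can be written as $(k\, 2^{j_1-n})/2^{j_1}$ with $k\, 2^{j_1-n} \in \ZZ$, so it never belongs to the open interval $\left(m_1/2^{j_1},(m_1+1)/2^{j_1}\right)$. Therefore the interior $\mathring{I}_{\bsj,\bsm}$ of $I_{\bsj,\bsm}$ contains no point of $\cP_{\bsa}$. As noted in the text following~\eqref{art4}, the summands of that formula vanish for points lying on the boundary of $I_{\bsj,\bsm}$, so the sum $\sum_{\bsz \in \cP_{\bsa} \cap I_{\bsj,\bsm}}(\cdots)$ is empty, i.e. equal to zero. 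Substituting this into~\eqref{art4} leaves only the contribution of the linear part $-t_1 t_2$, namely $-2^{-2j_1-2j_2-4}$, which is exactly the asserted value.

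I do not expect a real obstacle here. The only points requiring a little care are the distinction between $I_{\bsj,\bsm}$ and its interior, and the remark that the second coordinates of $\cP_{\bsa}$ are also integer multiples of $2^{-n}$ --- needed so that the reduction genuinely works for $j_2 \ge n$ as well --- which is immediate from the non-singularity of the upper triangular matrix $C_2$.
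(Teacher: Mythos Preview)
Your argument is correct and follows essentially the same approach as the paper: both observe that for $j_1\ge n$ or $j_2\ge n$ no point of $\cP_{\bsa}$ lies in the interior of $I_{\bsj,\bsm}$, so only the linear part contributes in~\eqref{art4}. You have simply spelled out in more detail why the coordinates of $\cP_{\bsa}$ (being multiples of $2^{-n}$) avoid the open dyadic intervals of length at most $2^{-n}$.
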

	
	\begin{proof}
	   The reason is that no point is contained in the interior of $I_{\bsj,\bsm}$ in this case and hence only the
		linear part of the discrepancy function contributes to the Haar coefficient in~\eqref{art4}.
	\end{proof}

\noindent{\bf Authors' Address:}

\noindent Ralph Kritzinger and Friedrich Pillichshammer, Institut f\"{u}r Finanzmathematik und angewandte Zahlentheorie, Johannes Kepler Universit\"{a}t Linz, Altenbergerstra{\ss}e 69, A-4040 Linz, Austria.\\
{\bf Email:} ralph.kritzinger(at)jku.at and friedrich.pillichshammer(at)jku.at

\end{document}